\newtheorem{thm}     {Theorem}[section]
\newtheorem{prop}    [thm]{Proposition}
\newtheorem{cor}     [thm]{Corollary}
\newtheorem{lemma}   [thm]{Lemma}
\newtheorem{remark}   [thm]{Remark}
\newcommand{\g}{\mathfrak g}
\newcommand{\B}{\mathbb B}
\newcommand{\C}{\mathbb C}
\newcommand{\R}{\mathbb R}
\newcommand{\T}{\mathbb T}
\newcommand{\Z}{\mathbb Z}
\newcommand{\id}{{\rm id}}
\newcommand{\Span}{{\rm Span}}
\def\Re{{\rm Re\,}}
\def\Im{{\rm Im\,}}
\def\<{\langle}
\def\>{\rangle}
\def\({\left(}
\def\){\right)}
\def\Aut{{\rm Aut}}
\def\Sym{{\rm Sym}}
\def\Mat{{\rm Mat}}
\def\Diag{{\rm Diag}}
\def\Int{{\rm Int}}
\def\Ad{{\rm Ad}}
\def\alert{}
\begin{document}

\subjclass[2000]{32M18, 22F50}

\title[Realizing Lie groups as automorphisms
of bounded domains]
{Realizing semisimple Lie groups as holomorphic \\
automorphism groups of bounded domains}

\author{George Shabat}
\address{Russian State University for the Humanities,
Moscow, 125267, Russia}
\email{george.shabat@gmail.com}

\author{Alexander Tumanov}
\address{Department of Mathematics, University of Illinois,
1409 West Green St., Urbana, IL 61801, U.S.A.}
\email{tumanov@illinois.edu}

\maketitle

\begin{abstract}
We consider a problem whether a given Lie group can be realized as the group of all biholomorphic automorphisms of a bounded domain in $\C^n$.
In an earlier paper of 1990, we proved the result for connected \emph{linear} Lie groups.
In this paper we prove the result for
a fairly large class of Lie groups including all connected real semisimple groups.
This class contains many non-linear Lie groups.

Key words: linear Lie group, semisimple Lie group, biholomorphic automorphism, domain of bounded type.
\end{abstract}

\section{Introduction}

Let $D\subset\C^n$ be a bounded domain. H. Cartan \cite{Cartan} proved that the group $\Aut(D)$ of all biholomorphic automorphisms of $D$ is a (real finite dimensional) Lie group.
Is the converse true? In other words, which Lie groups can be realized as $\Aut(D)$ for a bounded domain $D\subset\C^n$?

Bedford and Dadok \cite{Bedford} and Saerens and Zame \cite{Saerens} proved that every \emph{compact} Lie group can be realized as $\Aut(D)$ for a bounded strongly pseudoconvex domain $D\subset\C^n$.
On the other hand,
Wong \cite{Wong} and Rosay \cite{Rosay} proved that if $D\subset\C^n$ is bounded, strongly pseudoconvex, and $\Aut(D)$ is \emph{not compact}, then $D$ is biholomorphically equivalent to the unit ball $\B^n\subset\C^n$.
Therefore, if the group is not compact, we cannot expect to realize it as $\Aut(D)$ for a bounded strongly pseudoconvex domain $D\subset\C^n$.

A Lie group is called \emph{linear} if it is isomorphic to a subgroup of a general linear group $GL(n,\R)$ of all real
nonsingular $n\times n$ matrices.

We call a domain $D\subset\C^n$ a domain of \emph{bounded type} if $D$ is biholomorphically equivalent to a bounded domain.

In an earlier paper \cite{Shabat}, we proved that every (possibly non-compact) connected \alert{linear} Lie group can be realized as $\Aut(D)$, where $D\subset\C^n$ is a strongly pseudoconvex domain of bounded type.
Winkelmann \cite{Winkelmann} and Kan \cite{Kan} proved that every connected (possibly non-linear) Lie group can be realized as $\Aut(D)$, where $D$ is a \emph{complete hyperbolic Stein manifold}.

The question whether $D$ can be chosen a \emph{bounded domain} in $\C^n$ remains largely open.

In \cite{Shabat2024}, we answer the question in the affirmative for some examples of non-linear Lie groups including $\widetilde{SL}(2,\R)$, the universal cover of $SL(2,\R)$.
In this paper we extend the result to a fairly large class of Lie groups (Theorem \ref{Main-Algebraic})
including all connected real semisimple groups (Corollary \ref{Main-Semisimple}).

The paper is structured as follows.
In Section 2, we give a general plan for realizing non-linear Lie groups as holomorphic automorphisms groups. Loosely speaking, we show that if a group $G$ is a covering group of a realizable Lie group and the covering map can be obtained as a pullback of an analytic covering map with compact base, then $G$ can be realized.
In Section 3, we show that every linear Lie group $G$ acts by holomorphic transformations on a domain of bounded type. Moreover, this action is free, proper, it has totally real orbits, and this action reduces to the action of $G$ on its complexification.
In Section 4, we study covering groups of compact Lie groups.
In Section 5, we state and prove the main results. We use polar decomposition of matrices to construct a mapping from a linear Lie group to a compact Lie group needed for pulling back a covering map.
In Sections 6--8, we give technical details justifying the use of polar decomposition in Section 5.

\section{General results}
In this section, we closely follow \cite{Shabat2024}.

Recall that a \emph{group action} $G:X$ of a group $G$ on a set $X$ is a mapping $G\times X \to X$, which we denote as $(g,x)\mapsto gx$ or $(g,x)\mapsto g\cdot x$,
such that
$e x=x$ and $g_1 (g_2 x)=(g_1g_2) x$.
Here $e\in G$ is the identity.

A group action $G:X$ is \emph{free} (or with no fixed points) if for every $x\in X$, the map $G\to X$, $g\mapsto g x$ is injective.

A group action $G:X$ is \emph{proper} if the mapping $G\times X \to X\times X$, $(g,x)\mapsto (g x,x)$, is proper.
Here $G$ is a topological group, $X$ is a topological space, and the action $G\times X \to X$ is continuous.

A group action $G:X$ is \emph{holomorphic} if for every $g\in G$, the map $x\mapsto gx$ is biholomorphic. Here $X$ is a complex manifold.

Recall that a real submanifold $M$ of a complex manifold $X$
is \emph{totally real} if for every $z\in X$
the tangent space $T_zM$ does not contain complex lines.

\begin{prop}\cite{Bedford, Saerens, Shabat, Winkelmann}
\label{Common}
Let $G:\Omega$ be a holomorphic group action of a connected Lie group $G$ on a domain $\Omega\subset\C^n$, $n\ge 2$. Suppose the action is proper, free, and the orbits are totally real. Then a generic smooth small tubular $G$-invariant neighborhood $D$ of each orbit is strongly pseudoconvex, and $\Aut(D)$ is isomorphic to $G$.
\end{prop}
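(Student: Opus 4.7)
The plan is to cut $\Omega$ along a $G$-invariant strictly plurisubharmonic defining function that vanishes exactly on a chosen orbit, and then invoke genericity to eliminate any extra automorphisms of the resulting tube.

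\textbf{Construction of the tube.} Fix $x_0\in\Omega$ and set $M=Gx_0$. Freeness and properness imply that $M$ is a closed embedded submanifold of $\Omega$ diffeomorphic to $G$, and the slice theorem for free proper actions yields a smooth slice $N$ through $x_0$ transverse to $M$ such that $G\times N\to\Omega$, $(g,s)\mapsto g\cdot s$, is a $G$-equivariant diffeomorphism onto a tubular neighborhood $U$ of $M$. Because $M$ is totally real, a standard local computation (straighten $M$ in holomorphic coordinates and take the squared Euclidean distance to it) gives a smooth function $\rho_0$ on $N$ vanishing to second order at $x_0$, positive elsewhere, and strictly plurisubharmonic at $x_0$. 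Transporting by $\rho(g\cdot s):=\rho_0(s)$ produces a smooth $G$-invariant function that, by continuity of the Levi form, remains strictly plurisubharmonic on a possibly smaller $G$-invariant neighborhood of $M$, which I keep calling $U$. For a small regular value $c>0$ the set $D=\{z\in U:\rho(z)<c\}$ is a smooth $G$-invariant domain with strongly pseudoconvex boundary, and since the $G$-action is holomorphic and free the tautological map $g\mapsto (z\mapsto g\cdot z)$ gives an injection $G\hookrightarrow\Aut(D)$.

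\textbf{The reverse inclusion, which is the main difficulty.} By Cartan's theorem $\Aut(D)$ is a finite-dimensional real Lie group containing $G$, and by Fefferman's boundary regularity theorem every $\phi\in\Aut(D)$ restricts to a smooth CR diffeomorphism of $\partial D$, so $\Aut(D)$ is controlled by the CR structure of $\partial D$. The construction above admits an infinite-dimensional family of $G$-invariant perturbations (replace $\rho$ by $\rho+\psi$ for $G$-invariant $\psi$, or equivalently vary a radius function on the orbit space $M/G$, which is a manifold because the action is free and proper). Following the strategy of \cite{Bedford, Saerens, Shabat, Winkelmann}, one argues by transversality that the subset of perturbations for which $\partial D$ carries an infinitesimal CR symmetry transverse to the $G$-orbits is of positive codimension, and a Baire/residuality argument then shows that a generic tube has no extra continuous symmetries; discrete extensions are ruled out separately because any such extra automorphism must normalize $G$ and preserve the distinguished vanishing locus $M$ of $\rho$, forcing it into $G$ itself. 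The hard part of the whole proof is exactly this genericity/transversality step: in the possibly non-compact $G$ setting one has to check that the space of admissible $G$-invariant perturbations is rich enough on each orbit — all the rest (slice theorem, strict plurisubharmonicity near a totally real submanifold, Fefferman extension) is standard and used as a black box.
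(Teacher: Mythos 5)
Your outline matches the paper's own two-step sketch (boundary extension of automorphisms, then perturbation of the Chern--Moser invariants of $bD$ to kill extra symmetries); the paper does not prove this proposition either, but cites \cite{Bedford, Saerens, Shabat, Winkelmann} and only records the strategy. Two points in your write-up, however, are genuinely off.

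First, you invoke Fefferman's theorem as a black box to conclude that \emph{every} $\phi\in\Aut(D)$ restricts to a smooth CR diffeomorphism of $\partial D$. When $G$ is non-compact the orbit $M\cong G$ is non-compact, so the tube $D$ is an unbounded domain with non-compact boundary, and Fefferman's theorem (stated for bounded strongly pseudoconvex domains) does not apply directly; one only gets smooth extension to ``most of'' the boundary, and establishing even that is the delicate point in the non-compact case --- the paper explicitly flags this and refers to \cite{Winkelmann} for the argument. Second, your dismissal of ``discrete extensions'' does not work as stated: there is no a priori reason an automorphism of $D$ normalizes $G$, and even an automorphism preserving $M$ need not lie in $G$ (for instance, maps inducing right translations or automorphisms of $G$ on $M\cong G$ are not excluded by your argument). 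In the cited constructions both the continuous and the discrete extra automorphisms are eliminated by the same mechanism: a generic $G$-invariant perturbation makes the scalar CR invariants of $bD$ separate orbits and rigidify each orbit, so that any boundary CR diffeomorphism is forced to agree with an element of $G$. So the genericity step has to carry more weight than your sketch assigns to it.
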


The proof consists of two steps. In the first step, one proves that every $f\in \Aut(D)$ extends smoothly to the most of the boundary $bD$ of the domain $D$.
In the second step, using local invariants of CR structure of $bD$ \cite{Chern-Moser}, by small perturbations one can rule out automorphisms other than the ones induced by the action of $G$.

If $G$ is compact, then $D$ is a bounded strongly pseudoconvex domain, and the existence of smooth extension follows by Fefferman's theorem.
In the case of non-compact $G$, our short paper \cite{Shabat} did not include full details of the first step. The proof can be found in \cite{Winkelmann}.

Let $G$ be a Lie group that can be realized using Proposition \ref{Common}. We describe a situation in which a covering group of $G$ also can be realized.

We first recall a general construction.
Let $H, H_1, H_2$ be sets (groups or manifolds), and let
$\phi_\nu:H_\nu\to H$ be surjections (resp. homomorphisms or smooth mappings). We will use notation
$$
H_1\times_H H_2=\{(h_1,h_2)\in H_1\times H_2: \phi_1(h_1)=\phi_2(h_2)\}.
$$
We will write $H_1\times_H H_2$ without mentioning the maps $\phi_\nu$ when they are clear from the context.
Breaking the symmetry, we sometimes call the projection $H_1\times_H H_2\to H_1$ the \emph{pullback} of $\phi_2$ by $\phi_1$.

We now introduce an auxiliary definition to lighten up the statement of the next result.
Let $X$ and $\widehat X$ be (locally closed) complex submanifolds in $\C^k$ and $\C^m$ respectively.
Let $f:\widehat X\to X$ be a holomorphic covering map.
Suppose there is $\epsilon>0$ such that for every $x\in X$,
all the elements of the preimage $f^{-1}(x)$ are at least $\epsilon$ units apart.
Suppose in addition that the $\epsilon$-thickening of $\widehat{X}$ in $\C^m$ is a domain of bounded type.
Then we call $f:\widehat X\to X$ an \emph{admissible}
covering map. For definiteness, by dilating $\widehat X$,
we can put, say $\epsilon=1$.

\begin{prop}\label{Cover}
Let $G$ be a connected Lie group.
Let $G:\Omega$ be a holomorphic free proper action with totally real orbits on a domain $\Omega\subset\C^n$ of bounded type.
Let $M\subset\Omega$ be the $G$-orbit of a point $z_0\in\Omega$.
Let $\psi:\Omega\to X$ be a holomorphic mapping onto a
(locally closed) complex submanifold $X\subset\C^k$.
Suppose $H:=\psi(M)\subset X$ is compact,
and $X$ is contained in a small neighborhood of $H$.
Let $f:\widehat{X}\to X$ be an admissible covering map, here $\widehat{X}\subset\C^m$.
Let $\widehat H=f^{-1}(H)$, and
let $\widehat{G}=G\times_H\widehat{H}$ be a covering group for $G$ obtained by pulling back $f$ by the mapping
$G\ni g\mapsto\psi(gz_0)\in H$. Then there is a strongly pseudoconvex domain $D\subset\C^{n+m}$ of bounded type with $\Aut(D)\simeq\widehat{G}$.
\end{prop}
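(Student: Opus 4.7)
The plan is to manufacture a domain $\widehat{\Omega}\subset\C^{n+m}$ of bounded type carrying a holomorphic, free, proper $\widehat{G}$-action with totally real orbits, and then apply Proposition \ref{Common} directly. The key object is the fiber product
\[
\widetilde{\Omega} = \Omega\times_X\widehat{X} = \{(z,w)\in\Omega\times\widehat{X}:\psi(z)=f(w)\},
\]
a complex submanifold of $\C^{n+m}$ for which the first-coordinate projection $\widetilde{\Omega}\to\Omega$ is a holomorphic covering, because $f$ is a local biholomorphism. Since $G$ acts freely and transitively on $M$, the preimage of $M$ in $\widetilde{\Omega}$ is $\widetilde{M}\cong M\times_H\widehat{H}\cong G\times_H\widehat{H}=\widehat{G}$, and under this identification left translation gives a $\widehat{G}$-action on $\widetilde{M}$ which extends uniquely to a $\widehat{G}$-action on $\widetilde{\Omega}$ lifting the $G$-action on $\Omega$ along the covering.

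To pass from $\widetilde{\Omega}$ to a genuine $(n+m)$-dimensional domain, I would thicken using the admissibility constant: set
\[
\widehat{\Omega} = \{(z,w)\in\Omega\times\C^m : d(w,f^{-1}(\psi(z)))<\epsilon/2\},
\]
and restrict to the connected component containing $\widetilde{M}$. Admissibility forces the slice of $\widehat{\Omega}$ over each $z\in\Omega$ to be a disjoint union of $\epsilon/2$-balls centered at the points of $f^{-1}(\psi(z))$, so $\widehat{\Omega}$ is a tube around $\widetilde{\Omega}$ that locally has the form $\{(z,w):z\in U,\,|w-w_0(z)|<\epsilon/2\}$ for the holomorphic section $w_0(z)=f|_S^{-1}(\psi(z))$ on each sheet $S$. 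The $\widehat{G}$-action extends to each sheet by
\[
\widehat{g}\cdot(z,w) = \bigl(gz,\;w-w_0(z)+w_0'(gz)\bigr),
\]
where $g$ is the image of $\widehat{g}$ in $G$ and $w_0'$ describes the sheet targeted by $\widehat{g}$; being a translation in $\C^m$ by a $z$-holomorphic amount, it is biholomorphic on each sheet and compatible with the group law.

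For bounded type one notes $\widehat{\Omega}\subset\Omega\times\widehat{X}^\epsilon$, and the product of two bounded-type domains is of bounded type (embed each factor biholomorphically in a bounded domain), hence so is every subdomain. Freeness follows since $gz=z$ forces $g=e$ by freeness of $G:\Omega$, and the requirement $w_0=w_0'$ then forces $\widehat{g}$ to preserve the sheet, hence $\widehat{g}=e$; properness descends from $G:\Omega$ together with discreteness of the fibers of $\widehat{G}\to G$ after projecting a compact subset of $\widehat{\Omega}\times\widehat{\Omega}$ to $\Omega\times\Omega$. The orbit $\widetilde{M}$ is totally real in $\C^{n+m}$: it is totally real in the complex $n$-dimensional $\widetilde{\Omega}$ via the local biholomorphism $\widetilde{\Omega}\to\Omega$ that carries $\widetilde{M}$ onto the totally real orbit $M$, and a submanifold totally real in a complex submanifold is totally real in the ambient space. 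Proposition \ref{Common} then produces a strongly pseudoconvex tube $D\subset\widehat{\Omega}$ with $\Aut(D)\simeq\widehat{G}$, and $D$ inherits bounded type from $\widehat{\Omega}$.

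The main obstacle I anticipate is verifying that the lifted $\widehat{G}$-action on $\widetilde{\Omega}$, and its extension across $\widehat{\Omega}$, is globally well-defined and holomorphic; equivalently, that the sheet-permutation data coming from lifting the $G$-action along $\widetilde{\Omega}\to\Omega$ matches the abstract group law on $\widehat{G}=G\times_H\widehat{H}$. This is precisely where the hypotheses that $H=\psi(M)$ be compact and that $X$ sit in a small neighborhood of $H$ are essential: they give a controlled sheet structure for $\widehat{X}\to X$ near $\widehat{H}$, so that $\widehat{G}$ really is a Lie group covering $G$ and the lift of the action is globally consistent.
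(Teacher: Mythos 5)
Your proposal is correct and follows essentially the same route as the paper: you form the fiber product $\Omega\times_X\widehat X$, lift the $G$-action to a $\widehat G$-action by path/sheet lifting, thicken by the admissibility constant so the action extends by $w$-translations on each sheet, check bounded type, freeness, properness, and total reality, and conclude via Proposition \ref{Common}. The obstacle you flag at the end (global consistency of the lifted action with the group law on $\widehat G=G\times_H\widehat H$) is exactly the point the paper handles with its explicit curve-lifting definition of $\widehat g\cdot(z,w)$.
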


\begin{proof}
We obtain a covering manifold $\widehat{\Omega}$ of $\Omega$ by pulling back $f$ by $\psi$, that is,
$$
\widehat{\Omega}=\Omega\times_X\widehat{X}=  \{(z,w)\in\Omega\times\widehat{X}:
\psi(z)=f(w) \}\subset\C^{n+m}.
$$
The action $G:\Omega$ lifts to an action
$\widehat G:\widehat\Omega$.
Indeed, let $\widehat g\in \widehat G$ be represented by a curve
$\widehat g:[0,1]\to G$ with
$\widehat g(0)=e$, $\widehat g(1)=g$.
Then for $(z,w)\in\widehat{\Omega}$, we define
\begin{equation*}
\widehat g\cdot(z,w)=(gz, \gamma(1)),
\end{equation*}
here
$\gamma:[0,1]\to\widehat{X}$ is such a continuous curve that
$\gamma(0)=w$ and $\psi(\widehat g(t)z)=f(\gamma(t))$
for all $0\le t \le 1$.

Since $f:\widehat{X}\to X$ is admissible, we can assume that for every $w_1\ne w_2\in\widehat{X}$, if $f(w_1)=f(w_2)$, then $|w_1-w_2|>2$.
We define
$$
\widehat{\widehat\Omega}
=\{(z,w)\in\Omega\times\C^m: \exists (z,w_0)\in\widehat\Omega, |w-w_0|<1 \}\subset\C^{n+m}.
$$
The action $\widehat G:\widehat \Omega$ extends to
$\widehat G:\widehat{\widehat\Omega}$, so it is locally independent of the $w$-component.
That is, if $(z,w_0)\in\widehat{\Omega}$,
$\widehat g\cdot(z,w_0)=(g z,w_1)$,
and $|w-w_0|<1$, then we have
$$
\widehat g\cdot(z,w)=(g z,w-w_0+w_1).
$$
Since $f$ is admissible, the domain $\widehat{\widehat\Omega}$ is of bounded type.
The action $\widehat G:\widehat{\widehat\Omega}$ is free, proper, and the orbits are totally real.
Hence the conclusion follows by Proposition \ref{Common}.
\end{proof}

\section{Action of a linear Lie group on a domain of bounded type}
Let $G\subset GL(n,\R)$ be a closed connected subgroup.
In order to apply Proposition \ref{Cover}, we need an action of $G$ on a domain of bounded type.
Let $G^c\subset GL(n,\C)$ be the complexification of $G$.
Let $\delta>0$ be small, and let
$$
E_\delta=E_\delta(G)=\{gp: g\in G, p\in G^c, |p-I|<\delta \}
\subset G^c
$$
be a $G$-invariant neighborhood of the identity matrix $I$ relative to the action $G:G^c$ by left translations.

\begin{prop}\label{Action}
  There exists a holomorphic free proper action $G:\Omega$ with totally real orbits on a domain $\Omega\subset\C^m$ of bounded type.
  The domain $\Omega$ is biholomorphically equivalent to the product $E_\delta\times\B^k$,
  here $\B^k\subset\C^k$ is the unit ball.
  The corresponding action $G:E_\delta\times\B^k$ consists of the natural action $G:G^c$ and the trivial action on $\B^k$, that is, $g\cdot(h,b)=(gh,b)$,
  here $g\in G$, $(h,b)\in E_\delta\times\B^k$, and $gh$ is matrix product.
\end{prop}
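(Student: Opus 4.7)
Plan. The plan is to take the natural left-multiplication action $G \times G^c \to G^c$, $(g,h) \mapsto gh$, restrict it to the $G$-invariant subset $E_\delta$, verify the four action properties, and then realize $E_\delta \times \B^k$ biholomorphically as a bounded domain.

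First, I would establish the action properties directly. Realize $G^c$ as a closed complex Lie subgroup of $GL(n,\C)$, and embed $GL(n,\C)$ as a closed complex submanifold of $\C^{n^2+1}$ via $g \mapsto (g, 1/\det g)$. Left translation is holomorphic because it is the restriction of a complex-linear map on the ambient matrix space; it is free because left multiplication in any group is injective; it is proper because $G$ is closed in $GL(n,\R)$, hence in $GL(n,\C)$---if $(g_\alpha h_\alpha, h_\alpha) \to (p,q)$ then $g_\alpha = (g_\alpha h_\alpha)h_\alpha^{-1} \to pq^{-1}$, which lies in $G$ by closedness. At $h \in G^c$ the tangent space to the orbit $Gh$ is $\g \cdot h$, and the direct-sum decomposition $\g^c = \g \oplus i\g$ immediately gives $(\g \cdot h) \cap i(\g \cdot h) = (\g \cap i\g)\cdot h = 0$, so the orbits are totally real. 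Since $E_\delta$ is $G$-invariant by its very definition, these four properties descend to $G : E_\delta$.

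Second, I would produce the bounded realization $\Omega \cong E_\delta \times \B^k$. For small $\delta$ the set $E_\delta$ is a Stein tubular neighborhood of the closed, totally real, real-analytic submanifold $G \subset G^c$, sitting inside $\C^{n^2+1}$ as an open complex submanifold via the embedding above. Following the construction of the authors' earlier paper \cite{Shabat}, the strategy is to combine the Stein embedding of $G^c$ with a carefully chosen auxiliary holomorphic map, after enlarging the ambient space if necessary to trivialize the holomorphic normal bundle by the Oka--Grauert principle, so as to embed $E_\delta \times \B^k$ as a closed complex submanifold of a bounded open set in some $\C^m$. The $\B^k$ factor absorbs the complementary normal directions to the submanifold in the enlarged ambient space, and the resulting tubular-neighborhood biholomorphism matches the product structure.

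Finally, I would transport the product action $g \cdot (h,b) = (gh,b)$ from $E_\delta \times \B^k$ to $\Omega$ via the biholomorphism; holomorphicity, freeness, properness, and total reality of orbits all descend automatically. The main obstacle I expect is the middle step: producing a holomorphic embedding of $E_\delta \times \B^k$ into a bounded domain in $\C^m$ with enough quantitative control to simultaneously bound the image in the $E_\delta$-directions and to respect the trivialization of the normal bundle. This is precisely the technical content imported from \cite{Shabat}.
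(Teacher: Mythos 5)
There is a genuine gap, and it sits exactly at the step you flag as ``the main obstacle'': showing that a $G$-invariant neighborhood $\Omega\cong E_\delta\times\B^k$ is of \emph{bounded type}. That is not a technicality to be imported; it is the entire content of the proposition. Your first and third paragraphs (holomorphicity, freeness, properness via closedness of $G$, total reality via $\g^c=\g\oplus i\g$, and transport through a biholomorphism) are fine, but they concern the easy properties. The appeal to a Stein embedding of $G^c$ plus Oka--Grauert cannot deliver boundedness: a Stein manifold embeds as a \emph{closed} (hence unbounded, when noncompact) submanifold of some $\C^N$, and trivializing the normal bundle only gives you $E_\delta\times\B^k$ as a closed submanifold of an \emph{unbounded} product neighborhood. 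Nothing in that machinery forces the $E_\delta$-directions into a bounded set. Indeed, the authors explicitly remark after this proposition that they do \emph{not} know whether the natural left-translation-invariant neighborhood $M_\delta=\{x+iy:|y|<\delta|x|\}$ of $GL(n,\R)$ in $\Mat(n,\C)$ is of bounded type --- which is precisely the object your construction would produce.

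The missing idea is a concrete equivariant holomorphic embedding of $E_\delta$ into a known domain of bounded type. The paper takes $Z^+=\Sigma^+\times\Sigma^+$, a product of two Siegel domains $\Sigma^+=\{z\in\Sym(n,\C):\Re z>0\}$, with the twisted action $g\cdot(z_1,z_2)=(gz_1g^T,gz_2g^T)$, and maps $G^c\to Z$ by the orbit map $\Phi(h)=h\cdot z_0$ at a \emph{generic} point $z_0$. One then checks (an eigenvalue/commutation argument with $\xi z_{0\nu}+z_{0\nu}\xi^T=0$) that $\Phi_*(I)$ is injective, so $\Phi|_{E_\delta}$ is an embedding into $Z^+$ conjugating left translation to the twisted action; two copies of $\Sigma^+$ are needed to kill the kernel, and $n$ is taken odd so that $-I\notin G$ and the action is free. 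Boundedness of $\Omega=\{h\cdot b\}\subset Z^+$ is then automatic because Siegel domains are of bounded type. Without this (or some equivalent) device, your proof does not establish the one nontrivial assertion of the proposition.
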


\begin{proof}
Let $\Sigma=\Sym(n,\C)$ be the set of all symmetric complex $n\times n$ matrices.
Let $\Sigma^+=\{z=x+iy\in \Sigma: x=\Re z>0\}$. Then $\Sigma^+$ is a Siegel domain, hence a domain of bounded type.
Let $Z=\Sigma\times \Sigma$ and $Z^+=\Sigma^+\times \Sigma^+$.
Then $Z^+\subset\C^m$, $m=n(n+1)$, is a domain of bounded type.

We note that the action $G:\Sigma^+$, $g\cdot z=gzg^T$, is proper. Then we define the actions $G^c:Z$ and $G:Z^+$ by
$$
g\cdot (z_1,z_2)=(gz_1g^T, gz_2g^T).
$$
Without loss of generality we assume $n$ is odd.
Then one can see that on an open dense set, the action $G:Z^+$ is free and proper.
Indeed, if $g\in G$ acts with a fixed point $z=(z_1,z_2)$, $z_\nu=x_\nu+iy_\nu$, $\nu=1,2$, then $g$ preserves all the four matrices $x_1$, $x_2$, $y_1$, $y_2$.
If they are generic, then $g=\pm I$. Since $n$ is odd and $G$ is connected, $-I\notin G$, hence $g=I$.

Fix a generic point $z_0=(z_{01},z_{02})\in Z^+$.
Define a map
$$
\Phi:G^c\to Z, \qquad
\Phi(h)= h\cdot z_0.
$$
One can see that the map $\Phi$ commutes with the action
$G:G^c$ by left translations and with the action $G:Z$
defined above.
We claim that for a small $\delta>0$, $\Phi|_{E_\delta}$ is an imbedding and $\Phi(E_\delta)\subset Z^+$. It also follows from the claim that $\Phi(G)$, the orbit of $z_0$
is totally real.

To prove the claim, it suffices to show that the tangent map
$\Phi_*(I):L(G^c)\to T_{z_0}Z^+$ is injective,
here $L(G^c)\subset \Mat(n,\C)$ is the Lie algebra of $G^c$. Let $\xi\in \ker(\Phi_*(I))$. Then
$$
\xi z_{0\nu}+z_{0\nu}\xi^T=0,\quad
\nu=1,2.
$$
We make a substitution
$\xi=QXQ^{-1}$, $z_{0\nu}=QA_\nu Q^T$ with
$\det Q\ne0$. Assuming $\det z_{01}\ne0$, we can choose $Q$ such that $A_1=I$. Putting $A=A_2$, we have the equations
$$
X+X^T=0,\quad
XA+AX^T=0.
$$
Then $XA=AX$. If $A$ is a diagonal matrix with distinct eigenvalues, then $X$ is also diagonal.
Since $X^T=-X$, we have $X=0$, hence $\xi=0$.
If follows that for a generic $A$, the conclusion $\xi=0$
still holds.
Hence, $\ker(\Phi_*(I))=\{0\}$, and $\Phi_*(I)$ is injective.
Thus the claim follows.

We now show that $G:\Omega\subset Z^+$,
$\Omega\cong E_\delta\times\B^k$.
Indeed, let $L\subset Z$ be a complex affine subspace of $Z$ through $z_0$ such that $L$ is complementary to $T_{z_0}\Phi(E_\delta)$ in $Z$.
Define $B=\{z\in L: |z-z_0|<\epsilon\}$,
where $\epsilon>0$ is small.
Then $B\cong\B^k$, $k=\dim Z-\dim G^c$.
Define $\Omega=\{h\cdot b: h\in E_\delta, b\in B\}$.
The domain $\Omega$ is of bounded type because
$\Omega\subset Z^+$.
Then the mapping
$(h,b)\mapsto h\cdot b$, $E_\delta\times B\to\Omega$, is biholomorphic.
\end{proof}

\begin{remark}
{\rm
Instead of using the action $G:Z^+$, it might seem
more natural to use the action by left translations $G:\Mat(n,\C)$.
Let $\Omega$ be a small G-invariant neighborhood of $I$ relative to this action. One can see that
$\Omega\subset M_\delta$, for some $\delta>0$. Here
$M_\delta=\{z=x+iy\in\Mat(n,\C): |y|<\delta |x| \}$.
However, we don't know whether $M_\delta$ is a domain of bounded type. Another option would be the action
$g\cdot z=gzg^T$ on the domain
$\{z=x+iy\in\Mat(n,\C): x+x^T>0 \}$,
which is a domain of bounded type.
For odd $n$, this action is free on a dense open set,
but generic orbits of $G$ are not totally real.
We finally note that the only reason we restrict to
the space $\Sigma$ of symmetric matrices is to reduce the dimension of $\Omega$.
}
\end{remark}

\section{Covering groups of compact groups}

We need some details on geometric structure of covering groups of compact Lie groups. The results of this section are useful when the covering group is not compact.

\begin{prop}
\label{Cover-Compact}
  Let $H$ and $\widehat H$ be connected Lie groups and let $H$ be compact.
  Let $f:\widehat H\to H$ be a covering homomorphism.
  Then there is an equivalent covering (for which we keep the same notation)
  and an admissible covering $F:\widehat{X}\to X$
  (see the definition before Proposition \ref{Cover})
  such that $H\subset X$, $\widehat H\subset\widehat X$,
  and $F|_{\widehat H}=f$.
\end{prop}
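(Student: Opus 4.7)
The plan is to take $X$ to be a thin complex tubular neighborhood of $H$ in a faithful complexification, and to build $\widehat{X}$ explicitly using the structure theorem for compact connected Lie groups. By the Peter--Weyl theorem, choose a faithful unitary representation $H\hookrightarrow U(N)\subset GL(N,\C)$; let $H^c\subset GL(N,\C)$ denote its complexification. The polar decomposition yields a diffeomorphism $H^c\cong H\times\exp(i\mathfrak{h})$, where $\mathfrak h$ is the Lie algebra of $H$, so that
$$
X=\{g\exp(i\xi):g\in H,\ \xi\in\mathfrak{h},\ |\xi|<\delta\}\subset\C^{N^2}
$$
is, for small $\delta>0$, a complex submanifold containing $H$ and deformation retracting onto it. Hence $\pi_1(X)=\pi_1(H)$, so the subgroup classifying $f$ determines a unique holomorphic covering $F\colon\widehat{X}\to X$ with a distinguished lift of $H$ that we identify with $\widehat{H}$; the real work is to realize $\widehat{X}$ as a locally closed complex submanifold of some $\C^m$ satisfying the admissibility conditions.

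For this I invoke the structure theorem: any connected cover $\widehat H$ of a compact connected Lie group $H$ has the form $\widehat{H}\cong(\widehat{K}\times A)/\Gamma'$, where $\widehat{K}$ is a compact semisimple Lie group (a finite cover of the derived subgroup $[H,H]$), $A\cong\R^l\times\T^{r-l}$ is the connected center of $\widehat{H}$ (covering the connected center $\T^r$ of $H$ via the exponential $\R^l\to\T^l$ on the first factor), and $\Gamma'\subset Z(\widehat{K})\times A$ is a finite central subgroup. Embed $\widehat{K}$ into $U(N_1)$ by a faithful unitary representation, $\T^{r-l}$ into $(\C^*)^{r-l}$ via its characters, and $\R^l$ into $\C^l$ as the real subspace. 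The covering $\widehat H\to H$ extends holomorphically to $\widehat{K}^c\times\C^l\times(\C^*)^{r-l}\to H^c$ using the matrix embedding on $\widehat{K}^c$, the map $z\mapsto(e^{iz_1},\dots,e^{iz_l})$ on $\C^l$, and the identity on $(\C^*)^{r-l}$, all compatible with the relevant finite quotients. Take $\widehat{X}$ to be a small $\Gamma'$-invariant complex tubular neighborhood of $\widehat{K}\times\R^l\times\T^{r-l}$ in $\widehat{K}^c\times\C^l\times(\C^*)^{r-l}$, then pass to the free $\Gamma'$-quotient and embed the result in some $\C^m$ via $\Gamma'$-invariant polynomials.

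For admissibility, the preimages under $F$ are translates by $2\pi\Z^l$ in the $\C^l$-direction and a finite set in the compact direction, so $F^{-1}(x)$ is $\epsilon$-separated for some $\epsilon>0$. The $\epsilon$-thickening of $\widehat{X}$ is contained in the product of a bounded open subset of $\widehat{K}^c$, a tube domain $\R^l+iV\subset\C^l$ over a small cube $V$ (a product of strips, hence biholomorphic to a product of disks), and a bounded polyannular neighborhood of $\T^{r-l}$ in $(\C^*)^{r-l}$; each factor is of bounded type, so the product is too. The main obstacle is the finite quotient by $\Gamma'$: it may mix the semisimple and abelian factors, so the product description is only valid upstairs, and realizing $\widehat{X}$ as a closed complex submanifold of $\C^m$ requires passing to $\Gamma'$-invariants (which may increase $m$) and verifying that the bounded-type property of the thickening survives the quotient. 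The remaining verifications (freeness of the deck action, the homotopy equivalence $H\simeq X$, and holomorphy of $F$) are standard.
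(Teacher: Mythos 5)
Your overall strategy --- take $X$ to be a polar tube around $H$ in a matrix complexification, and build $\widehat X$ from the structure theory of $\widehat H$ --- is different in organization from the paper's. The paper factors the covering as $f=f_0\circ f_1$ with $f_0$ a \emph{finite} cover of compact groups and $f_1$ a cover with \emph{free abelian} kernel; the finite stage is handled entirely inside the compact world (both $H_1$ and $H$ have complexifications, $f_0$ extends holomorphically between bounded tubes, so admissibility is immediate), and the free stage is realized as a pullback $X_1\times_{\psi(X_1)}B^n\to X_1$ of the explicit covering of annuli by strips via a surjection $\psi:H_1\to\T^n$ produced by the commutator-quotient lemmas. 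The pullback is automatically a locally closed submanifold of $\C^{m}\times\C^n$, with $2\pi$-separated fibers and a bounded-type thickening because $X_1$ is bounded and $B^n$ is a product of strips. You instead decompose $\widehat H\cong(\widehat K\times\R^l\times\T^{r-l})/\Gamma'$ and try to build $\widehat X$ upstairs as a product tube and then quotient by the finite group $\Gamma'$.

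The gap is exactly the one you flag yourself and do not close: passing to the $\Gamma'$-quotient. Admissibility is not an intrinsic property of the covering --- it demands a concrete realization of $\widehat X$ as a locally closed submanifold of some $\C^m$ in which (i) fibers of $F$ are uniformly $\epsilon$-separated in the ambient Euclidean metric and (ii) the $\epsilon$-thickening in $\C^m$ is of bounded type. Your product tube upstairs satisfies both, but ``embed the result in $\C^m$ via $\Gamma'$-invariant polynomials'' does not obviously preserve either: when $\Gamma'$ mixes the semisimple and abelian factors, the invariant polynomials contain products of coordinates from the compact factor with the unbounded coordinates on $\C^l$, and such a map can destroy the uniform lower bound on fiber separation and can fail to send a tubular thickening to a set whose own thickening is of bounded type. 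So the construction is incomplete precisely at the step the definition of admissibility is designed to control. The fix is essentially to reorder the factorization as the paper does: perform the finite quotient first, at the level of \emph{compact} groups (where complexifications and bounded embeddings are free of charge), and isolate all the non-compactness into an explicit pullback of $\chi^n:B^n\to A^n$, so that no quotient of a non-compact object is ever taken. Your preliminary reductions (the tube $X$, $\pi_1(X)=\pi_1(H)$, the splitting of the connected center of $\widehat H$ as $\R^l\times\T^{r-l}$) are fine and parallel the paper's Lemmas on abelian covers, but as written the proof does not establish admissibility of $F$.
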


We first consider a special case.
Let $\T$ be the unit circle $\T=\{z\in\C:|z|=1\}$
with the natural projection
$\chi:\R\to \T$, $\chi(z)=e^{iz}$.

\begin{prop}
\label{Cover-free-kernel}
  Let $H$ and $\widehat H$ be connected Lie groups and let $H$ be compact.
  Let $f:\widehat H\to H$ be a covering homomorphism.
  Suppose $A=\ker f$ is a free abelian group of rank $n$.
  Then there exists a surjective homomorphism
  $\psi:H\to \T^n$ such that the covering $f:\widehat H\to H$
  is equivalent to $H\times_{\T^n}\R^n\to H$,
  the pullback by $\psi$ of the natural projection $\chi^n:\R^n\to\T^n$, and we have a commutative diagram:
  \[
  \begin{tikzcd}
  \widehat H \ar[r] \ar[d,"f"]
  & \R^n \ar[d, "\chi^n"]\\
  H \ar[r, "\psi"]
  & \T^n
  \end{tikzcd}
  \]
  Furthermore, the conclusion of Proposition \ref{Cover-Compact} holds.
\end{prop}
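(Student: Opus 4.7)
My plan is to construct $\psi$ from the abelianization of $H$, identify the pullback cover with $f$ by matching monodromies, and then produce the admissible extension $F:\widehat X\to X$ by embedding $H$ in a unitary group and thickening to a complex tube in $\C^{N^2+n}$.

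Since $H$ is compact connected, its abelianization $H/[H,H]$ is a torus $\T^k$. Writing the universal cover as $\widetilde H=\R^k\times K$ with $K$ simply connected compact semisimple, we have $\pi_1(H)\subset Z(\widetilde H)=\R^k\times Z(K)$, and the induced map $q:\pi_1(H)\to\pi_1(\T^k)=\Z^k$ is surjective with finite kernel $\pi_1(H)\cap K\subset Z(K)$; thus $q$ identifies $\Z^k$ with $\pi_1(H)/\mathrm{tor}\,\pi_1(H)$. The covering $f$ is classified by a surjective monodromy $\mu:\pi_1(H)\to A\simeq\Z^n$, and since $\Z^n$ is torsion free, $\mu$ factors as $\bar\mu\circ q$ for a surjective $\bar\mu:\Z^k\to\Z^n$. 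Extending $\bar\mu$ $\R$-linearly to $\R^k\to\R^n$ descends to a surjective Lie group homomorphism $\T^k\to\T^n$; composing $H\to\T^k\to\T^n$ defines $\psi$. Then $\psi_*=\mu$ on $\pi_1$, so the pullback cover $H\times_{\T^n}\R^n\to H$ and $f$ have the same classifying homomorphism and are equivalent as coverings; this yields the commutative diagram, and we henceforth replace $\widehat H\to H$ by this equivalent cover.

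For the admissible extension, fix a faithful unitary representation $H\hookrightarrow U(N)\subset\C^{N^2}$, in which $H$ is compact, real-analytic, and totally real. Take $X$ to be a small open $\eta$-neighborhood of $H$ in $\C^{N^2}$; the real-analytic homomorphism $\psi$ extends, after possibly shrinking $\eta$, to a holomorphic map $\tilde\psi:X\to(\C^*)^n$. Set
$$
\widehat X=X\times_{(\C^*)^n}\C^n=\{(z,w)\in X\times\C^n:\tilde\psi(z)=\chi^n(w)\}\subset\C^{N^2+n},
$$
where $\chi^n$ denotes the holomorphic extension $w\mapsto(e^{iw_1},\dots,e^{iw_n})$, and let $F:\widehat X\to X$ be the first projection. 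Then $F$ is a holomorphic covering pulled back from $\chi^n$, $H\subset X$, $\widehat H=H\times_{\T^n}\R^n\subset\widehat X$, and $F|_{\widehat H}=f$ tautologically. Distinct points of any fibre of $F$ differ by a nonzero element of $2\pi\Z^n$ in the $w$-coordinate, so their distance in $\C^{N^2+n}$ is at least $2\pi$, and we take $\epsilon=1$. For $\eta$ small, $\tilde\psi(X)\subset\{u:|\log|u_i||<\delta\}$ for some small $\delta$, hence $\widehat X\subset X\times\{w:|\Im w_i|<\delta\}$. The $\epsilon$-thickening of $\widehat X$ in $\C^{N^2+n}$ therefore lies in the product of the bounded domain $X_\epsilon\subset\C^{N^2}$ with the strip $\{|\Im w_i|<\delta+\epsilon\}\subset\C^n$; the strip is biholomorphic to a polydisk, the product is of bounded type, and an open subset of a bounded-type domain is of bounded type.

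The main obstacle is the construction of $\psi$: one must show that every surjection $\pi_1(H)\to\Z^n$ is realized by a Lie group homomorphism $H\to\T^n$, which rests on the structural splitting $\widetilde H=\R^k\times K$ of the universal cover and the resulting identification of $H^{\mathrm{ab}}$ with a quotient of $\R^k$. Once this is in place, the admissibility check reduces to a routine bounded-times-strip estimate for sufficiently small $X$.
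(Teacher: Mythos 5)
Your construction of $\psi$ is correct but takes a genuinely different route from the paper. The paper never mentions monodromy: it reduces the covering $\widehat H\to H$ to a covering of abelian groups by forming fiber products over the commutator subgroup (Lemmas \ref{Fiberwise-product}--\ref{Quotient-by-commutators}), then splits the abelian cover explicitly as $K\times\R^n\to K\times\T^n$ (Lemma \ref{Cover-of-abelian}) and discards the redundant factor $K$. You instead classify the cover by its monodromy $\mu:\pi_1(H)\to\Z^n$, observe that $\mu$ kills torsion, and realize the induced map on $\pi_1(H)/\mathrm{tor}\cong\Z^k=\pi_1(H/[H,H])$ by an actual homomorphism $\T^k\to\T^n$. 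Both arguments rest on the same structure theorem for compact connected groups; yours is shorter and makes the role of $\pi_1$ transparent, while the paper's stays entirely at the level of group homomorphisms and produces the equivalence of covers by an explicit isomorphism rather than by appealing to uniqueness of the cover with a given classifying subgroup. Either is acceptable for the first half of the statement.

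The admissibility argument is essentially the paper's (pull back $\chi^n$ over a strip covering an annulus, fiber separation $2\pi$, bounded set times a strip is of bounded type), but one step is glossed over: the holomorphic extension of $\psi$ to a full $\eta$-neighborhood $X$ of $H$ in $\C^{N^2}$. Since $H$ is totally real but in general not maximally real in $\C^{N^2}$, local holomorphic extensions of a real-analytic function on $H$ are not unique and need not patch to a global extension on a full-dimensional neighborhood; the identity theorem only forces agreement along the intrinsic complexification of $H$. The paper sidesteps this by taking $X$ inside the complexification $H^c$, where the homomorphism $\psi$ extends canonically to a holomorphic homomorphism $H^c\to(\C^*)^n$. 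Your version can be repaired by extending first to $H^c$ and then to an ambient neighborhood via a holomorphic retraction onto $H^c$, but as written the extension claim is unjustified. A further, downstream caution: Proposition \ref{Cover} requires $\psi$ to map $\Omega$ \emph{onto} $X$, and in the proof of Theorem \ref{Main-Algebraic} the image of $\psi$ lies in the complexification $(K^0)^c$; a full-dimensional $X\subset\C^{N^2}$ would not be that image, so for the intended application one should keep $X$ inside the complexification in any case.
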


We need some preliminaries before the proof.

\begin{lemma}
\label{Fiberwise-product}
  Let $G$ be a group, and let $N_1$ and $N_2$ be normal subgroups of $G$.
  Let $H_\nu=G/N_\nu, \nu=1,2$, $H=G/(N_1N_2)$.
  Let $\pi_\nu:G\to H_\nu$ and $\phi_\nu:H_\nu\to H$ be the natural projections onto the quotients.
  Then $\chi:G\to H_1\times_H H_2$, $\chi=(\pi_1,\pi_2)$
  is a surjection. If $N_1\cap N_2=\{e\}$, then $\chi$ is an isomorphism.
\end{lemma}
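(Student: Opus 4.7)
The plan is to prove surjectivity by a lifting-and-rearrangement argument in $G$ and then to identify the kernel of $\chi$ as $N_1\cap N_2$, from which the isomorphism statement follows immediately under the additional hypothesis.

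For surjectivity, I would take a compatible pair $(h_1,h_2)\in H_1\times_H H_2$ and choose arbitrary lifts $g_\nu\in\pi_\nu^{-1}(h_\nu)$ in $G$. The compatibility condition $\phi_1(h_1)=\phi_2(h_2)$ in $H=G/(N_1N_2)$ unwinds to $g_2^{-1}g_1\in N_1N_2$, so one can write $g_2^{-1}g_1=n_1n_2$ with $n_\nu\in N_\nu$. The key step is to adjust one of the lifts so as to represent both cosets simultaneously: setting $g:=g_1n_1^{-1}$, we have $\pi_1(g)=g_1N_1=h_1$ automatically, while $g=g_2(n_1n_2n_1^{-1})$, and the conjugate $n_1n_2n_1^{-1}$ lies in $N_2$ by normality, giving $\pi_2(g)=g_2N_2=h_2$. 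Hence $\chi(g)=(h_1,h_2)$.

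Next, I would note that $\chi$ is a group homomorphism, being the pair of two homomorphisms, so it suffices to compute its kernel. An element $g\in G$ lies in $\ker\chi$ iff $\pi_1(g)=e_{H_1}$ and $\pi_2(g)=e_{H_2}$, i.e., iff $g\in N_1\cap N_2$. Under the additional hypothesis $N_1\cap N_2=\{e\}$, the kernel is trivial, so the surjective homomorphism $\chi$ is an isomorphism.

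The argument is entirely formal and presents no real obstacle; the only point requiring care is the invocation of normality of $N_2$ to rewrite an element of $N_1N_2$ in the form $(n_1n_2n_1^{-1})\,n_1 \in N_2 N_1$, thereby packaging the two cosets into a single group element. No appeal to Lie-group structure is needed, as the statement is a purely set-theoretic fact about groups with normal subgroups, which is presumably why it is stated at this level of generality before being applied in the proof of Proposition \ref{Cover-free-kernel}.
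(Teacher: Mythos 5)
Your proof is correct and complete; the paper itself omits the proof of this lemma with the remark that it is simple, and your argument (adjusting the lift $g_1$ by $n_1^{-1}$ and using normality of $N_2$ to conjugate, then identifying $\ker\chi=N_1\cap N_2$) is exactly the standard one intended.
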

We omit a simple proof.

\begin{lemma}
\label{Commutator-is-compact}
  Let $G$ and $H$ be connected Lie groups and let $H$ be compact.
  Let $f:G\to H$ be a covering homomorphism.
  Then the commutator $G'=[G,G]$ is compact.
\end{lemma}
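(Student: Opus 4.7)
The plan is to reduce the question to the Lie algebra level, exploit that the common Lie algebra of $G$ and $H$ is of compact type, and invoke Weyl's theorem that the simply connected Lie group with semisimple compact Lie algebra is compact.

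First I would observe that $f:G\to H$, being a covering homomorphism, is a local diffeomorphism at the identity, so its differential $df_e:\mathfrak g\to\mathfrak h$ is a Lie algebra isomorphism. Since $H$ is a compact Lie group, $\mathfrak h$, and hence $\mathfrak g$, is the Lie algebra of a compact group. Every such Lie algebra admits the standard orthogonal decomposition
$$
\mathfrak g=\mathfrak z(\mathfrak g)\oplus\mathfrak s,\qquad \mathfrak s=[\mathfrak g,\mathfrak g],
$$
in which $\mathfrak s$ is semisimple of compact type.

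Next I would identify the algebraic commutator subgroup $G'=[G,G]$ with the connected analytic subgroup of $G$ whose Lie algebra is $\mathfrak s$; this agreement is standard for connected Lie groups. Let $\widetilde{G'}$ denote its universal covering group, which is then the simply connected Lie group with Lie algebra $\mathfrak s$.

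Finally I would invoke Weyl's theorem: the simply connected Lie group associated with a semisimple Lie algebra of compact type is compact. Thus $\widetilde{G'}$ is compact, and since $G'$ is a continuous image of $\widetilde{G'}$ under the covering map, it is compact as well. The main subtlety I foresee is justifying the identification of the set-theoretic commutator subgroup $[G,G]$ with the analytic subgroup corresponding to $[\mathfrak g,\mathfrak g]$, since $G$ itself is not assumed to be closed in any ambient group nor compact; once this point is made explicit and Weyl's theorem is in hand, the lemma follows immediately.
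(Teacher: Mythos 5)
Your proof is correct and follows essentially the same route as the paper: the paper writes $\widetilde G=\widetilde H=K\times\R^n$ with $K$ simply connected compact (the group-level form of your decomposition $\mathfrak g=\mathfrak z(\mathfrak g)\oplus\mathfrak s$ plus Weyl's theorem, via the structure theorem for compact connected groups) and then pushes $(\widetilde G)'=K'\times 0$ forward to $G'$. The subtlety you flag is handled in the paper by the cheaper observation that a surjective homomorphism $\pi:\widetilde G\to G$ carries commutator subgroups onto commutator subgroups, so $G'=\pi((\widetilde G)')$ without needing the full identification of $[G,G]$ with an analytic subgroup.
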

\begin{proof}
  By a description of connected compact Lie groups,
  $H=(K\times\T^n)/B$. Here $K$ is a simply connected compact group, and $B\subset K\times\T^n$ is a discrete central subgroup.
  (See \cite{Procesi}, Chap. 10, Sec. 7.2, Theorem 4.)
  Then the universal covers
  $\widetilde G=\widetilde H=K\times\R^n$,
  and $(\widetilde G)'=K'\times 0$ is compact.
  Let $\pi:\widetilde G\to G$ be the covering homomorphism.
  Then $G'=\pi((\widetilde G)')$ is compact, as desired.
\end{proof}

\begin{lemma}
\label{Quotient-by-commutators}
  Let $G$ be a connected Lie group and let
  $A\subset G$ be a discrete central subgroup.
  Suppose $A$ is a free abelian group.
  Suppose $H=G/A$ is compact.
  Let $G_1=G/G'$ and $H_1=G/(G'A)$, here $G'$ is the commutator of $G$.
  Then the covering homomorphism $G\to H$ is equivalent to $H\times_{H_1}G_1\to H$.
\end{lemma}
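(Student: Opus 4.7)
The plan is to invoke Lemma \ref{Fiberwise-product} with $N_1=A$ and $N_2=G'$. Both subgroups are normal in $G$ (the first because it is central by hypothesis, the second because the commutator subgroup is characteristic), so the lemma applies. In its notation we have $G/N_1=H$, $G/N_2=G_1$, and $G/(N_1N_2)=G/(G'A)=H_1$. The lemma then produces a surjective homomorphism
$$
\chi=(\pi_1,\pi_2):G\to H\times_{H_1}G_1,
$$
whose first component is the given covering map $f:G\to H$. Thus $\chi$ automatically intertwines $f$ with the canonical first-coordinate projection $H\times_{H_1}G_1\to H$, and the coverings will be equivalent as soon as $\chi$ is an isomorphism. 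By the second half of Lemma \ref{Fiberwise-product}, it suffices to show $A\cap G'=\{e\}$.

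The verification of $A\cap G'=\{e\}$ is the only substantive step, and I would argue it in two moves. First, Lemma \ref{Commutator-is-compact} applied to the covering homomorphism $f:G\to H$ with compact base gives that $G'$ is compact. Since $A$ is a discrete subgroup of $G$, the intersection $A\cap G'$ is then a discrete subgroup of a compact group, hence finite. Second, $A\cap G'$ is a subgroup of the free abelian group $A$, so it is torsion-free. A finite torsion-free group is trivial, so $A\cap G'=\{e\}$.

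With this identity in hand, $\chi$ is a group isomorphism from $G$ onto $H\times_{H_1}G_1$ commuting with the projections to $H$, which is exactly the equivalence of coverings asserted in the statement. I do not expect any further difficulty: the passage through Lemma \ref{Fiberwise-product} is purely formal, and the combination of compactness of $G'$ with torsion-freeness of $A$ carries the entire geometric content of the lemma.
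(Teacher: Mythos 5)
Your proposal is correct and follows the paper's own argument exactly: apply Lemma \ref{Commutator-is-compact} to get $G'$ compact, deduce $A\cap G'=\{e\}$ from freeness of $A$, and conclude via Lemma \ref{Fiberwise-product}. You simply spell out the intermediate step (discrete in compact $\Rightarrow$ finite, finite subgroup of a free abelian group $\Rightarrow$ trivial) that the paper leaves implicit.
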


This lemma reduces the study of a covering map $G\to H$
to a covering map $G_1\to H_1$ of abelian groups.

\begin{proof}
  By Lemma \ref{Commutator-is-compact}, $G'$ is compact.
  Since $A$ is free, $G'\cap A=\{e\}$. Then by Lemma \ref{Fiberwise-product} the conclusion follows.
\end{proof}

\begin{lemma}
\label{Cover-of-abelian}
  Let $G_1$ and $H_1$ be connected abelian Lie groups and let $H_1$ be compact.
  Let $f_1:G_1\to H_1$ be a covering homomorphism.
  Let $A=\ker f_1$ be a free abelian group of rank $n$.
  Then there exists an integer $m\ge0$ and isomorphisms
  $G_1\to K\times\R^n$ and $H_1\to K\times\T^n$
  such that the diagram
  $$
  \begin{tikzcd}
  G_1 \ar[r] \ar[d,"f_1"]
  & K\times\R^n \ar[d, "\id_K\times\chi^n"]\\
  H_1 \ar[r]
  & K\times\T^n
  \end{tikzcd}
  $$
  is commutative.
  Here $K=\T^m$ and
  $\chi^n:\R^n\to\T^n$ is the natural projection.
\end{lemma}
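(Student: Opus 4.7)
The plan is to lift the covering $f_1$ to the universal cover of $H_1$ and then apply the Smith normal form to the resulting sublattice of $\Z^d$.

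First, since $H_1$ is a connected compact abelian Lie group, the classification of such groups gives $H_1 \cong \T^d$, where $d = \dim H_1$. Its universal covering is $\chi^d: \R^d \to \T^d$ with kernel $\Z^d$. Since $f_1$ is a covering homomorphism, it is a local diffeomorphism, so $\dim G_1 = d$ and the universal cover of $G_1$ is also $\R^d$; let $\pi: \R^d \to G_1$ denote the universal covering homomorphism. The composition $f_1 \circ \pi : \R^d \to \T^d$ is then a universal covering of $\T^d$ and hence differs from $\chi^d$ by a linear automorphism of $\R^d$. Replacing $\pi$ by its composition with this automorphism, we may assume $f_1 \circ \pi = \chi^d$. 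Setting $B := \ker\pi \subset \Z^d = \ker\chi^d$, we obtain the short exact sequence $0 \to B \to \Z^d \to A \to 0$, i.e., $A \cong \Z^d/B$.

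Next, I would apply the Smith normal form to the sublattice $B \subset \Z^d$: there exists a $\Z$-basis $e_1,\ldots,e_d$ of $\Z^d$ and positive integers $d_1 \mid d_2 \mid \cdots \mid d_r$ such that $d_1 e_1,\ldots,d_r e_r$ is a basis of $B$, giving $A \cong \Z/d_1 \oplus \cdots \oplus \Z/d_r \oplus \Z^{d-r}$. The assumption that $A$ is free abelian of rank $n$ forces each $d_i = 1$ (no torsion) and $d - r = n$. Setting $m := d - n \ge 0$, we have $B = \Z e_1 \oplus \cdots \oplus \Z e_m$. Using this basis to split $\R^d = \R^m \oplus \R^n$ with $\Z^d = \Z^m \oplus \Z^n$ and $B = \Z^m \oplus \{0\}$, we obtain
\[
G_1 = \R^d/B \cong (\R^m/\Z^m) \times \R^n = K \times \R^n,\qquad H_1 = \R^d/\Z^d \cong (\R^m/\Z^m) \times (\R^n/\Z^n) = K \times \T^n,
\]
where $K = \T^m$, and under these identifications $f_1$ becomes $\id_K \times \chi^n$, giving the desired commutative diagram.

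The argument is essentially mechanical once the setup is in place; the only substantive input is the Smith normal form together with the torsion-freeness of $A$, which forces all elementary divisors of $B$ to equal $1$. There is no real obstacle here, though one must briefly justify the initial normalization $f_1 \circ \pi = \chi^d$, which amounts to choosing an isomorphism between two universal covers of the torus $H_1$.
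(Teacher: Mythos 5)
Your proof is correct and follows essentially the same route as the paper: both pass to the universal cover $\R^d$ and reduce the statement to choosing a $\Z$-basis of the larger lattice adapted to the smaller one, with freeness of $A$ supplying the splitting. The only cosmetic difference is that you apply the Smith normal form to $B\subset\Z^d=\ker(f_1\circ\pi)$, whereas the paper directly extends a $\Z$-basis of $\ker\pi$ to one of $\pi^{-1}(A)$ using that the quotient $A$ is free.
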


\begin{proof}
  Let $f:V\to G_1$ be the universal cover of $G_1$.
  Then $V$ is a real vector space.
  Let $C=\ker f$, $B=f^{-1}(A)$.
  Let $V_1=\Span_\R C$, and let $\gamma_1\subset C$ be a $\R$-basis of $V_1$.
  Since $B/C\simeq A$ is free, we can extend $\gamma_1$ to a $\Z$-basis $\gamma_1\cup\gamma_2$ of $B$.
  Let $V_2=\Span_\R\gamma_2$, and let $V_3\subset V$ be a
  complementary subspace to $V_1+V_2$.
  Then $K:=f(V_1)\simeq V_1/C$ is compact, $f_1|_K$ is an isomorphism, and $f|_{V_2+V_3}$ is an isomorphism.
  Then $f(V_2)\simeq\R^n$, $A\subset f(V_2)$, and
  $f_1(f(V_2))\simeq\T^n$.
  Since $V_3\simeq f_1(f(V_3))\subset H_1$ and $H_1$ is compact, we have $V_3=0$.
  Hence $G_1\simeq K\times\R^n$, $H_1\simeq K\times\T^n$,
  and the conclusion follows.
\end{proof}

\begin{proof}[Proof of Proposition \ref{Cover-free-kernel}]
  By Lemmas \ref{Quotient-by-commutators} and \ref{Cover-of-abelian},
  there are homomorphisms $\phi:H\to K$ and $\psi:H\to \T^n$
  such that the diagram
  $$
  \begin{tikzcd}
  \widehat H \ar[r] \ar[d,"f"]
  & K\times\R^n \ar[d, "\id_K\times\chi^n"]\\
  H \ar[r, "(\phi{,}\psi)"]
  & K\times\T^n
  \end{tikzcd}
  $$
  is commutative.
  Since $(\phi{,}\psi)$ is surjective, $\psi$ is surjective.
  Furthermore, $\widehat H$ can be identified with
  $H\times_{K\times\T^n}(K\times\R^n)$. Then we can redefine
  $$
  \widehat H=\{(h,k,x)\in H\times K\times\R^n: \phi(h)=k, \psi(h)=\chi^n(x)\},\quad
  f(h,k,x)=h.
  $$
  Since the component $k$ of $(h,k,x)\in \widehat H$ is completely determined by $h$, the factor $K$ is redundant, and we can further redefine
  $$
  \widehat H=\{(h,x)\in H\times\R^n:\psi(h)=\chi^n(x)\},\quad
  f(h,x)=h.
  $$
  Hence $\widehat H=H\times_{\T^n}\R^n$, and the first conclusion follows.

  We now show that $f$ extends to an admissible covering.
  The natural projection $\chi:\R\to \T$, $\chi(z)=e^{iz}$,
  extends as $\chi:B\to A$, here
  $B=\{z\in\C: |\Im z|<1\}$ is a strip, and
  $A=\chi(B)=\{z\in\C: e^{-1}<|z|<e\}$ is an annulus.

  Since $H$ is compact, it has a complexification $H^c$.
  We assume $H\subset \R^m$ and $H^c\subset \C^m$.
  Let $\epsilon>0$.
  Let $X=\{z\in H^c: \exists h\in H, |z-h|<\epsilon\}$
  be the $\epsilon$-thickening of $H$ in $H^c$. We choose
  $\epsilon$ small enough so that $X$ is diffeomorphic to $H\times\R^s$, $s=\dim H$.

  Shrinking $\epsilon$ if necessary, since $H$ is compact,
  we can assume that
  $\psi$ holomorphically extends to $X$. Keeping the notation
  $\psi$ for the extended map, we have $\psi:X\to (\C^*)^n$.
  Since $H$ is compact, we can assume $\psi(X)\subset A^n$.

  We define $F:\widehat X=X\times_{\psi(X)}B^n\to X$ as the pullback of $\chi^n:B^n\to A^n$ by $\psi$.
  Then $F|_H=f$. The elements of each preimage $F^{-1}(x)$
  are at least $2\pi$ units apart because $\chi$ has period $2\pi$. Finally, the bounded type condition is met because
  $X$ is bounded and $B^n$ is a domain of bounded type. Hence, $F$ is admissible, as desired.
\end{proof}

\begin{proof}[Proof of Proposition \ref{Cover-Compact}]
  Let $f:\widehat H\to H$ be a covering homomorphism. The abelian group $A=\ker f$ can be split into a direct sum $A=A_0+A_1$, in which $A_0$ is finite and $A_1$ is free. Accordingly,
  $f$ can be decomposed as $f=f_0\circ f_1$. Here
  $f_0:H_1\to H$, $f_1:\widehat H\to H_1$,
  $f_1(A_0)=\ker f_0$ is finite, and $A_1=\ker f_1$ is free.
  If $\widehat H$ is compact, then $A_1=0$,
  $\widehat H=H_1$, and $f_1=\id$.

  Since $H$ and $H_1$ are compact, they have complexifications $H^c$ and $H_1^c$, and the map $f_0$  holomorphically extends to a neighborhood of $H_1$ in $H^c_1$.
  Let $F_0$ denote the extension.
  Let $X$ be a small neighborhood of $H$ in $H^c$.
  We put $X_1=F^{-1}_0(X)$.
  Then the covering map $F_0:X_1\to X$ is admissible.

  By Proposition \ref{Cover-free-kernel}, if $X_1$ is a small enough neighborhood of $H_1$ in $H_1^c$, then there exists
  an admissible cover $F_1:\widehat X\to X_1$,
  where $\widehat H \subset \widehat X$,
  $F_1|_{\widehat H}=f_1$.
  Then $F=F_0\circ F_1: \widehat X\to X$ is admissible,
  as desired.
\end{proof}

\section{Polar decomposition and main results}
In order to apply Proposition \ref{Cover} we need a map $\psi$ defined in $\Omega$ and taking an orbit of $G$ to a compact set. By Proposition \ref{Action} it suffices to construct $\psi$ for $E_\delta$, a factor of $\Omega$. Recall
$$
E_\delta=\{gp: g\in G, p\in G^c, |p-I|<\delta \}
\subset G^c.
$$
We will construct $\psi$ using polar decomposition.
Recall that every real matrix $h\in GL(n,\R)$ has a unique polar decomposition $h=SQ$, where $S\in\Sym^+(n)$ is a positive definite symmetric matrix and $Q\in O(n,\R)$ is an orthogonal matrix.

Every complex matrix $h\in GL(n,\C)$ also has a polar decomposition $h=SQ$, where $S\in\Sym(n,\C)$ is a symmetric matrix and $Q\in O(n,\C)$ is an orthogonal matrix. Then $hh^T=SQQ^TS=S^2$, $S=\sqrt{hh^T}$, $Q=S^{-1}h=Sh^{T-1}$.

We will prove (Lemma \ref{Almost-Real}(d) and Proposition \ref{Eigenvalues}) that for $h\in E_\delta$, all eigenvalues $\lambda$ of the matrix $hh^T$ have $\Re\lambda>0$. In order to define the function
$\phi(h)=S=\sqrt{hh^T}$ we will use the branch of $\sqrt{\lambda}$ such that $\sqrt{\lambda}>0$ for $\lambda>0$. Then $\phi:E_\delta\to \Sym(n,\C)$ is a holomorphic function.
We define $\psi(h)=Q=\phi(h)h^{T-1}$.
Then $\psi:E_\delta\to O(n,\C)$ is a holomorphic function,
and for every $h\in E_\delta$, $\phi(h)\psi(h)=h$.
If $h\in G$, then $S=\phi(h)>0$, $Q=\psi(h)\in O(n,\R)$.

A subgroup $G\subset GL(n,\R)$ is called \emph{self-adjoint}
if $g\in G$ implies $g^T\in G$. Our main result is the following.
\begin{thm} \label{Main-Algebraic}
  Let $G$ be a real linear algebraic self-adjoint group. Let $G^0$ be the connected component of the identity of $G$.
  Let $\widehat{G}$ be a covering group for $G^0$.
  Then there is a strongly pseudoconvex domain $D\subset\C^m$ of bounded type such that $\Aut(D)\simeq\widehat{G}$.
\end{thm}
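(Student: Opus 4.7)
The plan is to apply Proposition \ref{Cover}, with the map $\psi$ built from polar decomposition and the admissible covering furnished by Proposition \ref{Cover-Compact}. First reduce to $G=G^0$: since the transpose involution preserves the identity component, $G^0$ is again real linear algebraic and self-adjoint, and it suffices to realize $\widehat G$ via $G^0$. By Proposition \ref{Action}, $G^0$ acts holomorphically, freely, properly, and with totally real orbits on a domain $\Omega\subset\C^m$ of bounded type biholomorphic to $E_\delta\times\B^k$, with $g\cdot(h,b)=(gh,b)$.

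Next, using $\phi(h)=\sqrt{hh^T}$ with the branch of the square root positive on positive reals, define $\psi:\Omega\to O(n,\C)$ by $\psi(h,b)=\phi(h)(h^T)^{-1}$; the holomorphic extendability of $\psi$ on $\Omega$ is precisely what Lemma \ref{Almost-Real}(d) and Proposition \ref{Eigenvalues} of Sections 6--8 establish through their control of the eigenvalues of $hh^T$ for $h\in E_\delta$. Fix $z_0=(I,b_0)\in\Omega$ and let $M=G^0\cdot z_0$. By Mostow's Cartan decomposition for self-adjoint closed subgroups of $GL(n,\R)$, the multiplication $K\times\mathfrak p\to G^0$, $(k,X)\mapsto k\exp X$, is a diffeomorphism, where $K:=G^0\cap O(n,\R)$ is the maximal compact subgroup and $\mathfrak p$ denotes the symmetric matrices in $L(G^0)$. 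In particular $K$ is compact and connected, and $\psi$ sends $M$ onto $K$. A computation of $d\psi_I(\xi)=(\xi-\xi^T)/2$ shows that the image of $d\psi_I$ on $L(G^c)$ is the Lie algebra of the complexification $K^c$; combined with the $K$-equivariance $\psi(kh)=k\psi(h)$ for $k\in K$, this gives that, after shrinking $\delta$, $X:=\psi(\Omega)$ is an open neighborhood of $K$ in $K^c$, hence a locally closed complex submanifold of $\C^{n^2}$ contained in an arbitrarily small neighborhood of $K$.

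The Cartan decomposition further shows that the Cartan projection $\pi:G^0\to K$, $g=SQ\mapsto Q$, is a retraction and a homotopy equivalence, so $\pi_*:\pi_1(G^0)\to\pi_1(K)$ is an isomorphism. Therefore, for any covering $\widehat G\to G^0$ there is a unique connected covering $\widehat K\to K$ with $\widehat G\simeq G^0\times_K\widehat K$, the pullback by $\pi$. Since $\psi(gz_0)=\pi(g)$ for $g\in G^0$, this is precisely the pullback construction appearing in Proposition \ref{Cover}. Applying Proposition \ref{Cover-Compact} to $\widehat K\to K$ produces an admissible cover $F_0:\widehat X_0\to X_0$ with $K\subset X_0\subset K^c$, $\widehat K\subset\widehat X_0$, and $F_0|_{\widehat K}$ equal to the given covering. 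Shrinking $\delta$ so that $X\subset X_0$ and setting $\widehat X:=F_0^{-1}(X)$ gives the admissible cover $F:\widehat X\to X$ required for Proposition \ref{Cover}, which then yields a strongly pseudoconvex domain $D\subset\C^{n+m}$ of bounded type with $\Aut(D)\simeq\widehat G$.

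The main obstacle is verifying that $\psi$ extends holomorphically to all of $E_\delta$, which depends on the eigenvalue control on $hh^T$ deferred to Sections 6--8. Once that technical input is available, the rest of the proof assembles previously established pieces: Mostow's Cartan decomposition bridges the noncompact group $G^0$, to which Proposition \ref{Action} furnishes the action, with its maximal compact subgroup $K$, to which Proposition \ref{Cover-Compact} attaches the admissible cover, and the polar projection $\psi$ is the link that makes Proposition \ref{Cover} applicable.
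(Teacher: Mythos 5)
Your overall architecture matches the paper's proof: reduce to $G^0$, take the action of Proposition \ref{Action} on $\Omega\cong E_\delta\times\B^k$, use the orthogonal part $\psi$ of the polar decomposition as the map to (a neighborhood of) the maximal compact subgroup $K=G^0\cap O(n,\R)$, observe that the Cartan projection is a homotopy equivalence so every covering of $G^0$ is pulled back from a covering of $K$, and then feed Proposition \ref{Cover-Compact} into Proposition \ref{Cover}. All of that is assembled correctly and is essentially what the paper does.

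However, there is a genuine gap at the step where you claim that, after shrinking $\delta$, $X=\psi(\Omega)$ is ``contained in an arbitrarily small neighborhood of $K$.'' Your justification --- the computation $d\psi_I(\xi)=(\xi-\xi^T)/2$ together with the equivariance $\psi(kh)=k\psi(h)$ for $k\in K$ --- only controls $\psi$ on a bounded neighborhood of $K$ inside $G^c$. But $E_\delta=\{gp: g\in G^0,\ |p-I|<\delta\}$ is $G^0$-invariant and hence unbounded when $G^0$ is noncompact: writing $g=S_0Q_0$ with $S_0\in P$ and $Q_0\in K$, the points $h=S_0Q_0p$ have symmetric part $S_0$ ranging over all of the noncompact factor $P$, and no left- or right-$K$-equivariance reduces this to a neighborhood of the identity. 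One must prove a \emph{uniform} estimate: the orthogonal part of $\sqrt{hh^T}^{-1}h$ stays within $O(\delta)$ of $SO(n,\R)$ for all $h\in E_\delta$, uniformly in the size and conditioning of $S_0$. This is exactly Proposition \ref{Image-Orth-Decomp}, whose proof requires not just the eigenvalue positivity of $hh^T$ (which you correctly cite, via Lemma \ref{Almost-Real}(d) and Proposition \ref{Eigenvalues}, as what makes $\psi$ well defined), but also the quantitative structure theorem $\sqrt{B}=U(I+iK)\Lambda U^T$ with $K=O(\delta)$ (Proposition \ref{Description-Sqrt(B)}), the multiplication rules for the cones $M_\delta$ (Lemma \ref{Almost-Real}(e),(f)), and the fact that a complex orthogonal matrix in $M_\delta$ is $O(\delta)$-close to a real one (Lemma \ref{Orth-close-to-real}). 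Without this uniform control, the hypothesis of Proposition \ref{Cover} that $X$ lies in a small neighborhood of $H$ --- which is where the admissible cover produced by Proposition \ref{Cover-Compact} actually lives --- is not verified, and the pullback construction cannot be carried out. You have identified only half of the technical input from Sections 6--8; the other half is the crux of the argument for noncompact $G^0$.
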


\begin{proof}
  We will use polar decomposition for a real linear algebraic self-adjoint group. (See \cite{Vinberg}, Chap. 1,
  Sec. 6.4.)
  Let $G\subset GL(n,\R)$.
  Let $P=G\cap \Sym^+(n)$, $K=G\cap O(n,\R)$.
  Then $G=PK$, the map $P\times K\to G$, $(p,k)\mapsto pk$, is a diffeomorphism, and $P$ is diffeomorphic to $\R^s$ for some $s$.

  Let $K^0$ be the connected component
  of the identity of $K$.
  Then $G^0=PK^0$, and we have
  $G^0\cap O(n,\R)=G^0\cap K=PK^0\cap K=K^0$.
  Since $K^0$ is connected, in fact
  $K^0=G^0\cap SO(n,\R)$. Using the above notation $\psi(h)$
  for the orthogonal component of $h$, we have
  $\psi(G^0)=K^0$, moreover, $\psi:G^0\to K^0$
  is a homotopy equivalence.

  Let $E_\delta=E_\delta(G^0)\subset (G^0)^c$ be as defined earlier. We will prove (Proposition \ref{Image-Orth-Decomp}) that for small $\delta>0$, $\psi(E_\delta)$ lies in a small neighborhood of $SO(n,\R)$. Since $\psi$ is holomorphic and $\psi(G^0)=K^0$, we have $\psi(E_\delta)\subset (K^0)^c$.
  Then in fact $\psi(E_\delta)$ is contained in a small neighborhood of $K^0$.

  We now apply Proposition \ref{Cover}, in which $G^0$ acts on the domain $\Omega$.
  Identifying $\Omega$ with $E_\delta\times\B^k$, we extend $\psi:E_\delta\to SO(n,\C)$ to $\Omega$ by putting $\psi(h,b)=\psi(h)$, where $(h,b)\in E_\delta\times\B^k$.
  Following the statement of Proposition \ref{Cover}, we define $M$ as the $G^0$-orbit of the point $z_0\in\Omega$
  and identify $M$ with $G^0$. Then $H=\psi(G^0)=K^0$.
  Shrinking $\Omega$ if necessary,
  we have $\psi(\Omega)\subset X$, where $X$ is a small neighborhood of $H=K^0$ in $(K^0)^c$.

  Since $\psi:G^0\to K^0$ is a homotopy equivalence, the covering $\widehat{G}\to G^0$ can be obtained by pulling
  back by $\psi$ some covering $f:\widehat{H}\to H$.
  By Proposition \ref{Cover-Compact} the latter can be extended to an admissible
  covering map $F:\widehat{X}\to X$.
  Theorem \ref{Main-Algebraic} now follows from Proposition \ref{Cover}.
\end{proof}

\begin{cor}\label{Main-Semisimple}
  Let $G$ be a real connected semisimple Lie group.
  Then there is a strongly pseudoconvex domain $D\subset\C^n$ of bounded type such that $\Aut(D)\simeq G$.
\end{cor}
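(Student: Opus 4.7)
The plan is to deduce the corollary from Theorem \ref{Main-Algebraic} by exhibiting the semisimple group $G$ as a covering of its adjoint group $\Ad(G)\subset GL(\g)$, where $\g$ is the Lie algebra of $G$. Since $G$ is semisimple, the center $Z(G)=\ker\Ad$ is discrete, and since $G$ is connected, the adjoint representation $\Ad:G\to\Ad(G)$ is a covering homomorphism onto a connected group. The image $\Ad(G)=\Int(\g)$ is the identity component of $\Aut(\g)$, which is a real algebraic subgroup of $GL(\g)$ cut out by the polynomial relations $f([X,Y])=[f(X),f(Y)]$. Thus $\Ad(G)$ is a real linear algebraic group that coincides with its own identity component.

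The next step is to verify that $\Ad(G)$ is self-adjoint after an appropriate choice of basis on $\g$. I would fix a Cartan involution $\theta$ of $\g$, with associated Cartan decomposition $\g=\mathfrak{k}\oplus\mathfrak{p}$, and use the positive definite inner product $B_\theta(X,Y)=-B(X,\theta Y)$, where $B$ is the Killing form. A standard computation shows that with respect to $B_\theta$, the operator $\ad(X)$ is skew-symmetric for $X\in\mathfrak{k}$ and symmetric for $X\in\mathfrak{p}$. Consequently the Lie algebra $\ad(\g)=\ad(\mathfrak{k})\oplus\ad(\mathfrak{p})$ is invariant under transpose in a $B_\theta$-orthonormal basis of $\g$, and by exponentiating so is the connected group $\Ad(G)$. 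Identifying $\g$ with $\R^n$ via such a basis realizes $\Ad(G)$ as a real linear algebraic self-adjoint subgroup of $GL(n,\R)$.

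With these ingredients in place, I would apply Theorem \ref{Main-Algebraic} with its $G$ taken to be $\Ad(G)$ and its $\widehat G$ taken to be our original $G$, viewed as a covering group of the connected identity component $(\Ad(G))^0=\Ad(G)$ via the map $\Ad$. The theorem then produces a strongly pseudoconvex domain $D\subset\C^m$ of bounded type with $\Aut(D)\simeq G$, which is exactly the claim. The only step with any real content is the self-adjointness assertion, but this is a classical consequence of the Cartan decomposition; beyond it the argument is a matching of hypotheses, and I do not anticipate a genuine obstacle.
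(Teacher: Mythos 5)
Your argument is essentially the paper's own proof: pass to the adjoint group, use the Cartan involution and the inner product $B_\theta(X,Y)=-B(X,\theta Y)$ to make $\Aut(\g)$ self-adjoint, note $\Ad:G\to\Int(\g)$ is a covering, and invoke Theorem \ref{Main-Algebraic}. One imprecision: you apply the theorem with its ``$G$'' equal to $\Ad(G)=\Int(\g)$, asserting this is itself a real algebraic group equal to its identity component; in general $\Int(\g)=\Aut(\g)^0$ is the identity component in the Hausdorff topology and need not be Zariski-closed (e.g.\ $\Int(\mathfrak{sl}(2,\R))\cong SO(2,1)^0$). The theorem is phrased exactly to absorb this: take its ``$G$'' to be the algebraic self-adjoint group $\Aut(\g)$, its ``$G^0$'' to be $\Int(\g)$, and its ``$\widehat G$'' to be your original group, as the paper does.
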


\begin{proof}
  Let $\g$ be the Lie algebra of $G$.
  Then the group of automorphisms $\Aut(\g)$ is a linear algebraic group.
  Moreover, $\Aut(\g)$ is self-adjoint relative to a metric on $\g$ defined using the Cartan decomposition of $\g$.
  (See \cite{Vinberg}, Chap. 4, Sec. 3.2.)
  Consider the group of inner automorphisms
  $\Int(\g)=\Ad (G)\subset \Aut(\g)$.
  Then $\Int(\g)=\Aut(\g)^0$ is the connected component of $\Aut(\g)$ (see \cite{Vinberg}, Chap. 1, Sec. 3.3.),
  and $\Ad:G\to\Int(\g)$ is a covering map.
  Then the conclusion follows by Theorem \ref{Main-Algebraic}.
\end{proof}

In the rest of the paper we will prove
Propositions \ref{Eigenvalues} and \ref{Image-Orth-Decomp},
which we used in the proof of the main result.

\section{Complex numbers, vectors, and matrices close to real ones}
In the rest of the paper we deal with complex objects close to real ones. We have already introduced
$E_\delta=E_\delta(G)$, which is in a sense a small neighborhood of a real linear group $G$ in its complexification.
It suffices to consider $G=GL(n,\R)^0$,
the set of all real $n\times n$
matrices with positive determinant.
For $\delta>0$, we also define
\begin{align*}
& N_\delta^+=\{z=x+iy\in\C: |y|<\delta x \}, \\
& V_\delta=\{z=x+iy\in\C^n: |y|<\delta |x| \}, \\
& M_\delta=\{z=x+iy\in\Mat(n,\C): |y|<\delta |x| \},\\
& M_\delta^+=\{z=x+iy\in\Mat(n,\C):z^T=z, \pm y<\delta x\},
\end{align*}
here $x$ and $y$ are real.
The above inequality $\pm y<\delta x$ means that both
matrices $\delta x\pm y$ are positive definite.
If it does not cause confusion, we will write, say
$z\in V_\delta$, if in fact $z\in V_{\epsilon}$, $\epsilon=O(\delta)$.
We state simple properties of the objects defined above. We will use the bilinear inner product
$$
\<u,v\>=\sum u_j v_j.
$$

\vfill\eject

\begin{lemma}\label{Almost-Real}\
\begin{itemize}
  \item[(a)] Let $B$ be a complex symmetric matrix. Then $B\in M_\delta^+$ if and only if for every $x\in\R^n$ we have $\<Bx,x\>\in N_\delta^+$.
  \item[(b)] Let $h\in E_\delta$, $x\in\R^n$, $x\ne0$. Then $h^Tx\in V_\delta$.
  \item[(c)] Let $z\in V_\delta$, $z\ne0$. Then $\<z,z\>\in N_\delta^+$.
  \item[(d)] Let $B=hh^T$, $h\in E_\delta$. Then $B\in M_\delta^+$.
  \item[(e)] Let $A\in M_\delta$, $|B-I|<\delta$. Then $AB\in M_\delta$ and $BA\in M_\delta$.
  \item[(f)] Let $A\in M_\delta$, $B\in SO(n,\R)$. Then $AB\in M_\delta$ and $BA\in M_\delta$.
\end{itemize}
\end{lemma}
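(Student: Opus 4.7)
The six items are short, largely independent calculations with one main interdependency: item (d) reduces to (a), (b), (c) applied in succession. My plan is to exploit the paper's stated convention that $V_\delta$, $M_\delta$, $N_\delta^+$ may absorb constant-factor rescalings of $\delta$, so every estimate only needs to be of the form $O(\delta)$.

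For (a), the natural first move is to decompose the complex symmetric matrix as $B=u+iv$ with $u,v$ real symmetric. For $x\in\R^n$ the quadratic form splits as $\<Bx,x\>=\<ux,x\>+i\<vx,x\>$, so the condition $\<Bx,x\>\in N_\delta^+$ for every nonzero $x$ becomes $|\<vx,x\>|<\delta\<ux,x\>$, which, taking both signs, is exactly positive definiteness of $\delta u\pm v$, i.e. $B\in M_\delta^+$. For (d), I would first note that $B=hh^T$ is symmetric, so (a) applies. For nonzero $x\in\R^n$, $\<Bx,x\>=\<h^Tx,h^Tx\>$; by (b) the vector $h^Tx$ lies in $V_\delta$ and is nonzero, and by (c) its self-inner-product lies in $N_\delta^+$, yielding (d).

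For (b), I would factor $h=gp$ with $g\in G\subset GL(n,\R)^0$ real and $|p-I|<\delta$, so that $h^Tx=p^T(g^Tx)$. Writing $y_0:=g^Tx$, a nonzero real vector, the expansion $p^Ty_0=y_0+(p^T-I)y_0$ exhibits an imaginary part of norm less than $\delta|y_0|$ and a real part of norm at least $(1-\delta)|y_0|$, so the ratio is $O(\delta)$. For (c), a direct expansion gives $\<z,z\>=|x|^2-|y|^2+2i\<x,y\>$; the real part is at least $(1-\delta^2)|x|^2>0$ and the imaginary part is bounded by $2|x||y|<2\delta|x|^2$, again with ratio $O(\delta)$.

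For (e), I would write $A=x+iy$ and $B=u+iv$ with $|u-I|<\delta$ and $|v|<\delta$. Expanding $AB=(xu-yv)+i(xv+yu)$, the real part equals $x+x(u-I)-yv$ with norm at least $|x|(1-O(\delta))$, while the imaginary part has norm $O(\delta)|x|$; the argument for $BA$ is identical. For (f), multiplication on either side by an element of $SO(n,\R)$ is an isometry in any standard matrix norm, so writing $AB=xB+iyB$ preserves both $|x|$ and $|y|$, and the defining inequality of $M_\delta$ carries over verbatim. No step presents a real obstacle; the only item requiring a moment of care is (a), where one must distinguish the complex symmetric condition from a Hermitian one and notice that, thanks to the symmetry of $B$, testing $\<Bx,x\>$ on real $x$ is enough.
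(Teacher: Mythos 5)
Your proposal is correct and follows the same route as the paper: item (d) is derived by chaining (b), (c), and (a) exactly as the authors do, and your verifications of the remaining items (which the paper dismisses as obvious) are the standard computations one would expect, all valid under the paper's $O(\delta)$ rescaling convention.
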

\begin{proof}
(d) Since $h\in E_\delta$, by (b) we have $h^Tx\in V_\delta$.
Then by (c) we have $\<Bx,x\>=\<h^Tx,h^Tx\>\in N_\delta^+$.
Then by (a) we have $B\in M_\delta^+$. The rest of the statements are obvious.
\end{proof}

\begin{remark}
{\rm
Despite the simplicity of the concept, one should use some caution. For instance, if $h\in E_\delta$ and $0\ne x\in\R^n$, then the inclusions $hx\in V_\delta$ or $hh^Tx\in V_\delta$ do not hold in general.
Likewise, if $h\in E_\delta$, then the inclusions $h^T h\in M_\delta^+$ or $(hh^T)^2\in M_\delta^+$ do not hold in general. Finally, $A,B\in M_\delta$ does not imply
$AB\in M_\delta$.
}
\end{remark}

We estimate the eigenvalues of matrices in $M_\delta^+$.
\begin{prop}\label{Eigenvalues}
  Let $B\in M_\delta^+$ with small $\delta>0$, and let $\lambda$ be an eigenvalue of $B$.
  Then $\lambda\in N_\epsilon^+$, $\epsilon=O(\delta)$. In particular,
  $\Re \lambda>0$.
\end{prop}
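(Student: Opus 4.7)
The plan is to fix an eigenvector $v \in \C^n$ of $B$ with $Bv = \lambda v$, $v \neq 0$, and to read off $\Re\lambda$ and $\Im\lambda$ by pairing against $v$. The essential observation is that the \emph{bilinear} inner product $\langle \cdot,\cdot\rangle$ used throughout this section is the wrong tool here, since $\langle v, v\rangle$ may vanish for a complex eigenvector. Instead I would switch to the Hermitian inner product
\[
(u, w) := \sum_j u_j \overline{w_j},
\]
for which $(v, v) = |v|^2 > 0$ and $(Bv, v) = \lambda |v|^2$.

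Write $B = x + iy$ with $x, y$ real symmetric. By the definition of $M_\delta^+$, $x$ is positive definite and $|\langle yw, w\rangle| < \delta \langle xw, w\rangle$ for every nonzero $w \in \R^n$. Split $v = a + ib$ with $a, b \in \R^n$. The key (short) calculation is that, because $x$ and $y$ are real and symmetric, expanding $(xv, v)$ and $(yv, v)$ in $a, b$ produces only real pieces: the off-diagonal imaginary contributions $\langle xb, a\rangle - \langle xa, b\rangle$ and $\langle yb, a\rangle - \langle ya, b\rangle$ cancel by symmetry of $x$ and $y$, leaving
\[
(xv, v) = \langle xa, a\rangle + \langle xb, b\rangle, \qquad (yv, v) = \langle ya, a\rangle + \langle yb, b\rangle.
\]
Both are real, and $(xv, v) > 0$ since $v \neq 0$ and $x > 0$.

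Applying the defining inequality of $M_\delta^+$ to $w = a$ and $w = b$ separately and adding yields $|(yv, v)| < \delta (xv, v)$. Equating real and imaginary parts of $\lambda |v|^2 = (xv, v) + i(yv, v)$ then gives $\Re\lambda > 0$ and $|\Im\lambda| < \delta \Re\lambda$, that is $\lambda \in N_\delta^+$; this in fact gives $\epsilon = \delta$, slightly stronger than the stated $O(\delta)$.

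I do not foresee a genuine obstacle. The only conceptual move is to abandon the bilinear pairing (which powers Lemma~\ref{Almost-Real}) in favor of the Hermitian one for this particular argument; once this is done, the pointwise hypothesis on $x$ and $y$ over $\R^n$ converts directly into bounds on the real and imaginary parts of $\lambda$, with no further work.
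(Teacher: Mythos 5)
Your proof is correct, and it takes a genuinely different route from the paper's. The paper diagonalizes the real part, $B_0=U\Lambda U^T$, extracts entrywise bounds $|a_{ij}|<2\delta\sqrt{\lambda_i\lambda_j}$ on the imaginary part from positivity of $2\times2$ minors, rewrites $B=U\Lambda^{1/2}(I+iK)\Lambda^{1/2}U^T$ with $|K|=O(n\delta)$, and then localizes $\lambda$ near one of the eigenvalues $\lambda_j$ of $B_0$ via the equation $\Gamma w=-iKw$. You instead pair an eigenvector $v=a+ib$ against itself in the Hermitian inner product; since $B^*=x-iy$ for $B=x+iy$ with $x,y$ real symmetric, $x$ is the Hermitian part of $B$ and $iy$ the skew-Hermitian part, so $\Re\lambda\,|v|^2=(xv,v)=\langle xa,a\rangle+\langle xb,b\rangle$ and $\Im\lambda\,|v|^2=(yv,v)=\langle ya,a\rangle+\langle yb,b\rangle$, and the hypothesis $\pm y<\delta x$ applied to $a$ and $b$ separately gives $\Re\lambda>0$ and $|\Im\lambda|<\delta\,\Re\lambda$. (The degenerate case $a=0$ or $b=0$ is harmless since $v\ne0$.) This is essentially the standard fact that the spectrum lies in the numerical range; it is shorter, and it yields the sharper, dimension-free constant $\epsilon=\delta$ in place of the paper's $O(n\delta)$. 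What the paper's longer computation buys is the explicit normal form $B=U\Lambda^{1/2}(I+iK)\Lambda^{1/2}U^T$ and the localization of $\lambda$ near the spectrum of $\Re B$, a template that is reused in the structurally parallel argument for Proposition~\ref{Description-Sqrt(B)}; for the eigenvalue statement alone your argument suffices and is cleaner.
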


For our application, we only need $\Re \lambda>0$,
but the stronger conclusion $\lambda\in N_\epsilon^+$ holds.

\begin{proof}
Let $B=B_0+iB_1$ with real $B_0$ and $B_1$.
Then $\pm B_1<\delta B_0$.
By replacing $\delta B_0$ by$B_0$, we can temporarily put $\delta=1$.
There exists $U\in SO(n,\R)$ such that $B_0=U\Lambda U^T$. Here $\Lambda=\Diag(\lambda_1,\ldots,\lambda_n)>0$ is a diagonal matrix containing the eigenvalues of $B_0$.

We put $B_1=UA U^T$, $A=(a_{ij})$. Then $\Lambda\pm A>0$.
For the diagonal elements of the latter, we have
$\lambda_j\pm a_{jj}>0$, hence $|a_{jj}|<\lambda_j$.
For a pair of unequal indices, say 1 and 2, we have
$$
\begin{pmatrix}
  \lambda_1+a_{11} & a_{12} \\
  a_{21} & \lambda_2+a_{22}
\end{pmatrix} >0.
$$
Then the determinant of the above matrix is
$(\lambda_1+a_{11})(\lambda_2+a_{22})-a_{12}^2>0$.
Since $|a_{jj}|<\lambda_j$, we have
$|a_{12}|<2\sqrt{\lambda_1 \lambda_2}$. Hence for all indices we have
$$
|a_{ij}|<2\sqrt{\lambda_i \lambda_j}.
$$
Then
$B=UB'U^T$,
$B'=\Lambda+iA=\Lambda^{1/2}(I+iK)\Lambda^{1/2}$.
Here $K=\Lambda^{-1/2}A\Lambda^{-1/2}=(K_{ij})$,
$K_{ij}=a_{ij}\lambda_i^{-1/2}\lambda_j^{-1/2}$.
Then $|K_{ij}|<2$. For arbitrary $\delta>0$, we have
$|K_{ij}|<2\delta$, $|K|<\delta_1:=2n\delta$.

Let $\lambda$ be an eigenvalue of $B$. Then $\lambda$ is an eigenvalue of $B'$ as well. Then there is $z\ne0$ such that $B'z=\lambda z$.
Put $w=\Lambda^{1/2}z$. Then we have
$(I+iK)w=\lambda\Lambda^{-1}w$.
Put $\Gamma=I-\lambda\Lambda^{-1}=\Diag(\gamma_1,\ldots,\gamma_n)$,
$\gamma_j=1-\lambda\lambda_j^{-1}$.
Then
$$
\Gamma w=-iKw, \quad
|\Gamma w|<\delta_1|w|.
$$
Then there is $j$ such that
$|\gamma_j|=|1-\lambda\lambda_j^{-1}|<\delta_1$.
Then $\lambda\in\C$ lies on a ray from 0 passing through the point $\lambda\lambda_j^{-1}$ in the disc
$\{z\in\C: |z-1|<\delta_1\}$, that is,
$\lambda$ lies in the open sector in $\C$ spanned by that disc.
Then $\lambda\in N_\epsilon^+$,
$\epsilon=\delta_1/(1-\delta_1^2)^{-1/2}=O(\delta)$.
\end{proof}

\section{Square roots of complex matrices close to positive definite ones}

Let $B\in M_\delta^+$. Then by Proposition \ref{Eigenvalues},
for every eigenvalue $\lambda$ of $B$, we have $\Re\lambda>0$. The function $\sqrt{\lambda}$ has a branch in the domain $\{\lambda\in\C: \Re\lambda>0\}$ with $\sqrt{\lambda}>0$ if $\lambda>0$. Using that branch of $\sqrt{\lambda}$, we define $S=\sqrt{B}$. We will call this $S$ the \emph{principal} square root of $B$.

\begin{lemma}\label{ReS>0}
  Let $B\in M_\delta^+$. Let $S=\sqrt{B}$ be the principal square root of $B$. Then $\Re S>0$.
\end{lemma}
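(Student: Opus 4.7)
The plan is a straightforward deformation argument. Writing $B = B_0 + iB_1$ with real symmetric $B_0,B_1$ satisfying $\pm B_1 < \delta B_0$, I would introduce the one-parameter family $B_s = B_0 + isB_1$, $s \in [0,1]$, joining the real positive definite matrix $B_0$ to $B$. The identity
\[
\delta B_0 \pm sB_1 = s(\delta B_0 \pm B_1) + (1-s)\delta B_0
\]
shows $B_s \in M_\delta^+$ for every $s$, so by Proposition~\ref{Eigenvalues} each $B_s$ has spectrum in the right half plane. The principal square root $S_s := \sqrt{B_s}$ is therefore defined and depends holomorphically on $B_s$, so $s \mapsto S_s$ is continuous with $S_0 = B_0^{1/2} > 0$ and $S_1 = S$.

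Each $S_s$ is complex symmetric (any holomorphic function of a complex symmetric matrix is symmetric, by polynomial approximation), so I would write $S_s = P_s + iQ_s$ with $P_s, Q_s$ real symmetric. Separating real and imaginary parts in $S_s^2 = B_s$ gives
\[
P_s^2 - Q_s^2 = B_0, \qquad P_s Q_s + Q_s P_s = sB_1.
\]
The first identity is the crux: rearranged, $P_s^2 = B_0 + Q_s^2 \ge B_0 > 0$, because $Q_s^2$ is the square of a real symmetric matrix and hence positive semidefinite. Thus $P_s$ is invertible and no eigenvalue of $P_s$ equals zero for any $s \in [0,1]$.

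To finish I would invoke continuity of eigenvalues: the real symmetric matrices $P_s$ depend continuously on $s$, so their eigenvalues do as well, and by the previous step none of them ever crosses zero. At $s=0$ every eigenvalue of $P_0 = B_0^{1/2}$ is strictly positive, so by the intermediate value theorem every eigenvalue of $P_s$ remains strictly positive throughout $[0,1]$. Taking $s=1$ yields $\Re S = P_1 > 0$. I do not foresee any real obstacle beyond the identity $P_s^2 = B_0 + Q_s^2$; everything else is bookkeeping, the only slightly delicate points being the verification that the deformation stays inside $M_\delta^+$ and that the principal square root of a complex symmetric matrix is itself complex symmetric.
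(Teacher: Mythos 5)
Your proposal is correct and follows essentially the same route as the paper: both arguments rest on the identity $(\Re S)^2=\Re B+(\Im S)^2>0$, which forces $\Re\sqrt{B}$ to be invertible throughout a connected family, and then a continuity-of-eigenvalues argument from a base point where $\Re\sqrt{B}>0$ (you deform to $B_0=\Re B$; the paper uses connectedness of the cone $M_\delta^+$ and the base point $I$). The only difference is cosmetic.
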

\begin{proof}
Let $S=S_0+iS_1$ with real $S_0$ and $S_1$.
Then $Re B=S_0^2-S_1^2>0$, and $S_0^2>0$.
Then $\det S_0^2\ne0$, whence $\det S_0\ne0$.

Note that $M_\delta^+$ is a convex cone, a connected set.
The function $\phi(B)=\Re\sqrt{B}$, where $\sqrt{B}$ is the principal square root of $B$, is a continuous function
on $M_\delta^+$, and $\det\phi(B)\ne0$.
Note for $I\in M_\delta^+$, we have  $\phi(I)=I>0$. If $\phi(B_0)$ has a negative eigenvalue at some $B_0\in M_\delta^+$, then $\det\phi(B)=0$ for some $B\in M_\delta^+$, which can not occur. Hence $\Re S=\phi(B)>0$ for all $B\in M_\delta^+$.
\end{proof}

\begin{prop}\label{Description-Sqrt(B)}
  Let $B\in M_\delta^+$. Let $S=\sqrt{B}$ be the principal square root of $B$. Then $S\in M_\epsilon^+$, $\epsilon=O(\delta)$.
  Furthermore, $S=U(I+iK)\Lambda U^T$. Here $U\in SO(n,\R)$,
  $\Lambda>0$ is diagonal, and $K=O(\delta)$.
\end{prop}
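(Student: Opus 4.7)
The plan is to reuse the decomposition constructed in the proof of Proposition \ref{Eigenvalues} to reduce the problem to computing $\sqrt{B'}$ for a concrete matrix $B'$, and then to solve for that square root explicitly via a Lyapunov-type equation. By that construction, write $B=UB'U^T$ with $U\in SO(n,\R)$, $D=\Diag(d_1,\dots,d_n)>0$ (so that $D^2$ is the diagonal form of $\Re B$), and
\[
B'=D(I+iK_0)D,\qquad K_0=K_0^T\in\Mat(n,\R),\quad K_0=O(\delta).
\]
Since $U$ is real orthogonal, this is a similarity, hence $S=U\sqrt{B'}\,U^T$, and it suffices to analyze $T:=\sqrt{B'}$.

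I would seek $T$ in the form $T=(I+iN)D$ with $N=O(\delta)$. Squaring and cancelling the right factor of $D$ reduces $T^2=B'$ to $(I+iN)D(I+iN)=D(I+iK_0)$, i.e.\ to the Lyapunov-type equation $DN+ND=DK_0-iNDN$. The linear operator $L:N\mapsto DN+ND$ acts entrywise by multiplication by $d_i+d_j>0$, so it is invertible, with
\[
L^{-1}(DK_0)_{ij}=\frac{d_i(K_0)_{ij}}{d_i+d_j},
\]
bounded in absolute value by $|(K_0)_{ij}|$. A Banach contraction on a small ball around $L^{-1}(DK_0)$ then yields a unique solution $N=O(\delta)$. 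Since $T=D+O(\delta)$ has spectrum near the positive numbers $d_i$, it lies on the principal branch of $\sqrt{\cdot}$, so $T=\sqrt{B'}$ and $S=U(I+iN)DU^T$, giving the second conclusion of the proposition with $\Lambda:=D$ and $K:=N$.

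For the first conclusion $S\in M_\epsilon^+$, split $N=N_0+iN_1$ into real and imaginary parts. Symmetry of $B$ forces $S^T=S$ (the principal square root is holomorphic on a neighborhood of $\sigma(B)$, hence equals $p(B)$ for some polynomial $p$), which under the decomposition translates into $DN^T=ND$; taking real and imaginary parts shows $N_0D$ and $N_1D$ are real symmetric. Hence
\[
\Re S=U(D-N_1D)U^T,\qquad \Im S=UN_0DU^T
\]
are symmetric; $\Re S>0$ holds by Lemma \ref{ReS>0} or directly, since $L^{-1}(DK_0)$ is real and so $N_1=O(\delta^2)$. The bound $\pm\Im S<\epsilon\Re S$ reduces, after conjugation by $U$ and multiplication on both sides by $D^{-1/2}$, to an operator-norm bound on the symmetric matrix $D^{-1/2}N_0D^{1/2}$; from the explicit formula its $(i,j)$ entry equals $d_i^{1/2}d_j^{1/2}(K_0)_{ij}/(d_i+d_j)$ up to $O(\delta^2)$, and AM--GM gives $d_i^{1/2}d_j^{1/2}\le\frac{1}{2}(d_i+d_j)$, so this entry is at most $|(K_0)_{ij}|/2=O(\delta)$.

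The main obstacle is the uniformity of this last estimate: the $d_i$ can vary in wildly different magnitudes, so the matrix inequality $\pm\Im S<\epsilon\Re S$ is not implied by any unweighted operator-norm bound on $N_0$. The AM--GM cancellation between $d_i^{1/2}d_j^{1/2}$ in the numerator and $d_i+d_j$ in the denominator is what keeps $\epsilon=O(\delta)$ genuinely uniform over $B\in M_\delta^+$.
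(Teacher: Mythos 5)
Your route is genuinely different from the paper's: the paper starts from the known principal square root $S=S_0+iS_1$, diagonalizes $\Re S=U\Lambda U^T$ (not $\Re B$), and reads off the entrywise bounds $|a_{ij}|<2\delta\min(\lambda_i,\lambda_j)$ directly from the positivity of $2\times2$ minors of $\delta S_0^2\pm(S_0S_1+S_1S_0)$, whereas you try to \emph{construct} the square root by solving $(I+iN)D(I+iN)=D(I+iK_0)$ relative to the diagonalization of $\Re B$. The constructive idea is attractive, but as written it has a genuine gap at the contraction step. The quadratic term is $L^{-1}(NDN)_{ij}=\frac{1}{d_i+d_j}\sum_k N_{ik}d_kN_{kj}$, and the factor $d_k/(d_i+d_j)$ is unbounded over $B\in M_\delta^+$: take $D=\Diag(1,1,M)$ with $M$ large; then for $N$ in an unweighted ball of radius $\sim\delta$ around $L^{-1}(DK_0)$ one can have $|L^{-1}(NDN)_{11}|\sim\delta^2M$, so the ball is not mapped into itself and the map is not a contraction uniformly in $D$. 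You flagged the non-uniformity of the $d_i$ only for the final inequality $\pm\Im S<\epsilon\,\Re S$ and handled it there by AM--GM, but the same non-uniformity already breaks the existence/size argument for $N$, and everything downstream ($N_1=O(\delta^2)$, the "up to $O(\delta^2)$" in the formula for $(N_0)_{ij}$) inherits the problem.

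The gap is fixable, but the fix is exactly where the real work lies: you must run the iteration in a $D$-weighted norm, e.g.\ $\|N\|_w=\max_{i,j}|N_{ij}|\,(d_i+d_j)/d_i$, for which $\|L^{-1}(DK_0)\|_w\le\max|(K_0)_{ij}|$ and $\|L^{-1}(NDN)\|_w\le n\|N\|_w^2$ (using $d_k^2\le(d_i+d_k)(d_k+d_j)$), so the scheme closes and yields $|N_{ij}|\lesssim\delta\,d_i/(d_i+d_j)$ --- the weighted bound your later AM--GM step silently uses. This weighted estimate is the analogue of the paper's $|a_{ij}|<2\delta\min(\lambda_i,\lambda_j)$, which the paper gets for free from the $2\times2$-minor argument. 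Separately, your identification of $T=(I+iN)D$ with the \emph{principal} square root needs more than "$T=D+O(\delta)$": the perturbation $iND$ is not small in absolute terms when some $d_j$ is huge. The correct argument is that $T$ is similar to $D^{1/2}(I+iN)D^{1/2}$ and the eigenvalue localization in the proof of Proposition \ref{Eigenvalues} (which does not use symmetry of the small matrix) places $\sigma(T)$ in $N_{O(\delta)}^+$, hence in the open right half-plane, which characterizes the principal root. With those two repairs your proof would be correct and would give a valid (different) choice of $U,\Lambda,K$ in the conclusion.
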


For our application, we only need the formula for $\sqrt{B}$, which implies the conclusion $\sqrt{B}\in M_\epsilon^+$.
Curiously, the statements
$B^2\in M_\epsilon^+$ and even $\Re (B^2)\ge0$ are false.

\begin{proof}
As above, $S=S_0+iS_1$ with real $S_0$ and $S_1$.
Then $B=S^2=S_0^2-S_1^2+i(S_0S_1+S_1S_0)$.
Since $B\in M_\delta^+$, we have
$\delta(S_0^2-S_1^2)\pm(S_0S_1+S_1S_0)>0$.
In particular, $\delta S_0^2\pm(S_0S_1+S_1S_0)>0$.
By replacing $S_1$ by $\delta^{-1}S_1$, we can temporarily
put $\delta=1$.

There exists $U\in SO(n,\R)$ such that $S_0=U\Lambda U^T$. Here $\Lambda=\Diag(\lambda_1,\ldots,\lambda_n)$
is a diagonal matrix containing the eigenvalues of $S_0$.
By Lemma \ref{ReS>0}, we have $S_0>0$, whence $\Lambda>0$.

We put $S_1=UA U^T$, $A=(a_{ij})$.
Then $\Lambda^2\pm (\Lambda A+A\Lambda)>0$.
Note $(\Lambda A+A\Lambda)_{ij}=(\lambda_i+\lambda_j)a_{ij}$.
For diagonal elements of
$\Lambda^2\pm (\Lambda A+A\Lambda)$, we have
$\lambda_j^2\pm 2\lambda_j a_{jj}>0$, hence $|a_{jj}|<\lambda_j/2$.
For a pair of unequal indices, say 1 and 2, we have
$$
\begin{pmatrix}
\lambda_1^2+2\lambda_1 a_{11}& (\lambda_1+\lambda_2)a_{12}\\
(\lambda_1+\lambda_2)a_{21} & \lambda_2^2+2\lambda_2 a_{22}
\end{pmatrix} >0.
$$
Then the determinant of the above matrix is
$(\lambda_1^2+2\lambda_1 a_{11})
(\lambda_2^2+2\lambda_2 a_{22})
-(\lambda_1+\lambda_2)^2 a_{12}^2>0$.
Since $|a_{jj}|<\lambda_j/2$, we have
$|a_{12}|<2{\lambda_1 \lambda_2}(\lambda_1 +\lambda_2)^{-1}
\le 2\min(\lambda_1, \lambda_2)$. Hence, for all indices and arbitrary $\delta>0$, we have
$$
|a_{ij}|<2\delta\min(\lambda_i, \lambda_j).
$$
For every $x\in\R^n$, $y=U^Tx$, we have
\begin{align*}
|\<S_1x,x\>| & =|\<Ay,y\>|
<2\delta\sum_{i,j}\min(\lambda_i, \lambda_j)|y_iy_j|
\le\delta\sum_{i,j}\min(\lambda_i, \lambda_j)(y_i^2+y_j^2)
\\
& \le 2\delta\sum_{i,j}\lambda_jy_j^2
=2n\delta\sum_{j}\lambda_jy_j^2
=2n\delta\<\Lambda y,y\>=2n\delta\<S_0 x,x\>.
\end{align*}
Then $\<Sx,x\>\in N_\epsilon^+$, $\epsilon=2n\delta=O(\delta)$.
By Lemma \ref{Almost-Real}(a), $S\in M_\epsilon^+$
as desired. Furthermore,
$$
S=U(\Lambda+iA)U^T=U(I+iK)\Lambda U^T.
$$
Here $K=A\Lambda^{-1}$, $K_{ij}=a_{ij}\lambda_j^{-1}$,
$|K_{ij}|<2\delta \min(\lambda_i, \lambda_j)\lambda_j^{-1}
\le 2\delta$. Hence $K=O(\delta)$ as desired.
\end{proof}

\section{Complex orthogonal matrices close to real ones}
\begin{lemma}\label{Orth-close-to-real}
Let $Q\in SO(n,\C)\cap M_\delta$ with small $\delta>0$. Then
there exists $U\in SO(n,\R)$ such that $|Q-U|<\epsilon$,
$\epsilon=O(\delta)$.
\end{lemma}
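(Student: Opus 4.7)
The plan is to obtain $U$ as the orthogonal factor in the real polar decomposition of $\Re Q$. I would write $Q = Q_0 + iQ_1$ with $Q_0, Q_1 \in \Mat(n,\R)$ and unpack $Q^T Q = I$ into the two real identities
\[
Q_0^T Q_0 - Q_1^T Q_1 = I, \qquad Q_0^T Q_1 + Q_1^T Q_0 = 0,
\]
to be used together with the hypothesis $|Q_1| < \delta|Q_0|$ (Frobenius norm) coming from $Q \in M_\delta$.

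The first step is a uniform size bound. Taking the trace of the first identity gives $|Q_0|^2 - |Q_1|^2 = n$, and combining this with $|Q_1|^2 < \delta^2 |Q_0|^2$ forces $|Q_0|^2 < n/(1-\delta^2)$ and hence $|Q_1|^2 = O(\delta^2)$. So already $|Q - Q_0| = |Q_1| = O(\delta)$, and the remaining task is to approximate the real matrix $Q_0$ by a real orthogonal matrix to order $O(\delta^2)$.

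For the second step, I would set $R = Q_1^T Q_1$, which is real, symmetric, and positive semidefinite with $|R| \le |Q_1|^2 = O(\delta^2)$. The first identity then reads $Q_0^T Q_0 = I + R$, so its principal square root $P := (I+R)^{1/2}$ is real, symmetric, positive definite, and satisfies $|P - I|, |P^{-1} - I| = O(\delta^2)$. The candidate $U := Q_0 P^{-1}$ obeys $U^T U = P^{-1}(I+R)P^{-1} = I$, so $U \in O(n,\R)$, and submultiplicativity of the Frobenius norm gives
\[
|Q_0 - U| = |Q_0(I - P^{-1})| \le |Q_0|\cdot|I - P^{-1}| = O(\delta^2),
\]
whence $|Q - U| \le |Q_1| + |Q_0 - U| = O(\delta)$.

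The final step is to upgrade $U \in O(n,\R)$ to $U \in SO(n,\R)$. Because $\det$ is a polynomial in bounded matrix entries and $|Q - U| = O(\delta)$, we have $|\det U - \det Q| = O(\delta)$; since $\det Q = 1$ and $\det U \in \{\pm 1\}$, for small enough $\delta$ we must have $\det U = 1$. The only delicate point in the argument is the size bound of the first step: the set $M_\delta$ is scale invariant, so without using orthogonality there would be no a priori control on $|Q|$, and it is precisely the combination of $Q^T Q = I$ with $|Q_1| < \delta|Q_0|$ that pins $|Q_0|$ near $\sqrt{n}$ and lets the polar decomposition argument run.
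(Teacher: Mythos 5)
Your proof is correct and follows essentially the same route as the paper: decompose $Q=Q_0+iQ_1$, combine the orthogonality relation with the $M_\delta$ bound to pin down $|Q_0|$ and force $|Q_1|=O(\delta)$, then take the orthogonal factor of the polar decomposition of $Q_0$ as the approximant $U$. The only differences are cosmetic (Frobenius norm and the trace identity versus the paper's operator-norm argument, and your explicit determinant check that $U\in SO(n,\R)$, which the paper leaves implicit).
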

\begin{proof}
We put $Q=Q_0+iQ_1$ with real $Q_0$ and $Q_1$.
For every $Q\in SO(n,\C)$, we have
$|Q_0|^2=|Q_1|^2+1$.
Indeed, for every $x\in\R^n$ we have $\<Qx,Qx\>=\<x,x\>$.
Taking the real parts, we have
$|Q_0x|^2-|Q_1x|^2=|x|^2$.
Then
$|Q_0|^2=\max_{|x|=1}|Q_0x|^2
=\max_{|x|=1}|Q_1x|^2+1=|Q_1|^2+1$.

If $Q\in SO(n,\C)\cap M_\delta$, then
$|Q_0|^2=|Q_1|^2+1 < \delta^2 |Q_0|^2+1$.
Then $|Q_0|< (1-\delta^2)^{-1/2}$,
$|Q_1|< \delta(1-\delta^2)^{-1/2}=O(\delta)$, and
$|Q|=1+O(\delta)$.

Let $Q_0=SU$ be the polar decomposition of the real matrix
$Q_0$. Here $S=S^T>0$, $U\in SO(n,\R)$,
$S=\sqrt{B}$, $B=Q_0 Q_0^T$.
The matrix $B$ is close to the identity. Indeed,
$B=Q_0 Q_0^T=(Q-iQ_1)(Q-iQ_1)^T=I+O(\delta)$.
Then $S=\sqrt{B}=I+O(\delta)$ and $S^{-1}=I+O(\delta)$.
Finally $U=S^{-1}Q_0=Q_0+O(\delta)=Q+O(\delta)$,
hence $Q-U=O(\delta)$ as desired.
\end{proof}

We recall that $\psi:E_\delta\to SO(n,\C)$ is the orthogonal part of a complex matrix in polar decomposition.
The following proposition plays a crucial role in the proof of the main result.
\begin{prop}\label{Image-Orth-Decomp}
For small $\delta>0$, the image $\psi(E_\delta)$ lies in the $\epsilon$-thickening of $SO(n,\R)$, $\epsilon=O(\delta)$.
\end{prop}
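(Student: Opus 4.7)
The plan is to use equivariance properties of $\psi$ to reduce the statement to a canonical form, and then to solve a Sylvester equation for the perturbation of the square root with a scale-sensitive estimate.

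Because $(hU)(hU)^T = hh^T$ for any $U\in O(n,\C)$, the principal square root is unchanged and $\psi(hU) = \psi(h)U$; similarly $\psi(VhV^T) = V\psi(h)V^T$ for $V\in SO(n,\R)$, using that $\sqrt{VBV^T} = V\sqrt{B}V^T$ on the principal branch. Given $h = gp \in E_\delta$, I would write the real polar decomposition $g = S_0U_0$ with $S_0 > 0$ symmetric and $U_0 \in SO(n,\R)$, set $\tilde p = U_0 p U_0^T$ (so $|\tilde p - I| < \delta$ and $h = S_0 \tilde p U_0$), and diagonalize $S_0 = V\Lambda V^T$ with $V\in SO(n,\R)$ and $\Lambda = \Diag(\lambda_1,\ldots,\lambda_n) > 0$. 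Setting $\hat p = V^T \tilde p V$, the two equivariances combine to give
$$\psi(h) = V\,\psi(\Lambda\hat p)\,V^T U_0,$$
so it suffices to prove $|\psi(\Lambda\hat p) - I| = O(\delta)$ uniformly in $\Lambda$; this will yield $\psi(h) = U_0 + O(\delta)$ with $U_0 \in SO(n,\R)$.

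For $h' = \Lambda\hat p$, one has $h'h'^T = \Lambda(I + G)\Lambda$ with $G = \hat p\hat p^T - I$ complex symmetric and $|G| = O(\delta)$. Seeking $S = \Lambda + E$ with $S^2 = h'h'^T$ gives
$$\Lambda E + E\Lambda + E^2 = \Lambda G\Lambda.$$
Introducing the rescaled unknown $K_{ij} = E_{ij}(\lambda_i + \lambda_j)/(\lambda_i\lambda_j)$ converts this into the scale-free fixed-point equation
$$K_{ij} + \sum_k c_{ikj}K_{ik}K_{kj} = G_{ij},\qquad c_{ikj} = \frac{\lambda_k^2}{(\lambda_i + \lambda_k)(\lambda_k + \lambda_j)} \in [0,1].$$
A contraction-mapping argument on the ball $\|K\|_\infty \le 2\|G\|_\infty$ (valid for small $\delta$) gives $\|K\|_\infty = O(\delta)$, uniformly in $\Lambda$. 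Translating back, $|E_{ij}| \le C\delta\min(\lambda_i,\lambda_j)$, hence $|(\Lambda^{-1}E)_{ij}| \le C\delta$ for all $i,j$ and thus $\|\Lambda^{-1}E\| = O(\delta)$.

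The conclusion then follows from
$$S^{-1}\Lambda = (\Lambda + E)^{-1}\Lambda = (I + \Lambda^{-1}E)^{-1} = I + O(\delta),$$
so $\psi(\Lambda\hat p) = S^{-1}\Lambda\hat p = (I + O(\delta))(I + O(\delta)) = I + O(\delta)$, as required. The principal obstacle is the uniform-in-$\Lambda$ bound in the Sylvester step: naive entrywise or operator-norm estimates for $E$ inflate by the condition number of $\Lambda$, so the Neumann inversion of $I + \Lambda^{-1}E$ would fail. The rescaling to $K$ makes the nonlinear kernel $c_{ikj}$ dimensionless and bounded by $1$, producing exactly the estimate needed when passing from $E$ to $\Lambda^{-1}E$. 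A minor technical point is to verify that the $S$ found is the principal square root from Proposition \ref{Description-Sqrt(B)}; this follows from continuity ($E = 0$ when $G = 0$) and uniqueness of the small solution.
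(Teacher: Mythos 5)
Your route is genuinely different from the paper's and, modulo one step, it works. The paper first proves a structural formula for the principal square root of an arbitrary $B\in M_\delta^+$ (Proposition \ref{Description-Sqrt(B)}, via positivity of $2\times2$ minors), then shows only that $Q=\psi(h)$ lies in $M_\delta$, and finally invokes a separate rigidity lemma (Lemma \ref{Orth-close-to-real}) saying a complex orthogonal matrix with relatively small imaginary part is near $SO(n,\R)$. You instead exploit the equivariances $\psi(hU)=\psi(h)U$ and $\psi(VhV^T)=V\psi(h)V^T$ to reduce to $h=\Lambda\hat p$ and prove the stronger local statement $\psi(\Lambda\hat p)=I+O(\delta)$ directly, which makes Lemma \ref{Orth-close-to-real} unnecessary. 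The two arguments hinge on the same quantitative phenomenon -- the weighted bound $|E_{ij}|\lesssim\delta\min(\lambda_i,\lambda_j)$, which the paper extracts from $2\times2$ positive-definiteness minors and you extract from the scale-free contraction with kernel $c_{ikj}\in[0,1]$ -- and your derivation of it is correct. Your version also pinpoints the nearby real orthogonal matrix explicitly as the orthogonal factor $U_0$ of $g$, which is a pleasant byproduct.

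The one genuine soft spot is the identification of your $S=\Lambda+E$ with the \emph{principal} square root, which you dismiss as minor. ``Uniqueness of the small solution'' only identifies $S$ with $\sqrt{B}$ if you already know that $\sqrt{B}-\Lambda$, after rescaling, lies in the contraction ball -- but that is essentially the content of the paper's Proposition \ref{Description-Sqrt(B)}, so invoking it here would be circular. The continuity argument (deform $G$ to $0$) is also not airtight as stated: square roots of a matrix need not be isolated when eigenvalues of $B_t$ collide (e.g.\ $B=I$ has a continuum of square roots), and even where they are isolated their mutual separation is not uniform in $\Lambda$, so a continuous path of square roots agreeing with the principal one at $t=0$ does not obviously agree for all $t$. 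The clean fix is available from your own estimates: since $|E_{ij}|\le C\delta\min(\lambda_i,\lambda_j)\le C\delta\sqrt{\lambda_i\lambda_j}$, one has $\Lambda+E=\Lambda^{1/2}(I+N)\Lambda^{1/2}$ with $|N|=O(\delta)$; the sector argument from Proposition \ref{Eigenvalues} (applied to $(I+N)w=\mu\Lambda^{-1}w$) then places every eigenvalue $\mu$ of $\Lambda+E$ in a narrow sector about $\R_{>0}$, in particular in the open right half-plane, and a square root of $B$ with spectrum in the open right half-plane is unique and equals the principal one. With that paragraph added (and the routine remark that the fixed point $K$ is symmetric, so $S$ is a legitimate symmetric polar factor), your proof is complete.
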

\begin{proof}
Let $h\in E_\delta$, $h=gp$,
$g\in\Mat(n,\R)$, $\det g>0$,
$p\in\Mat(n,\C)$, $|p-I|<\delta$.
Then $p^{T-1}=I+O(\delta)$.

By definition, $\psi(h)=Q$ is the orthogonal part of the polar decomposition $h=SQ$.
Here $S=\sqrt{B}$ is the principal square root of $B=hh^T$.
Recall that by Lemma \ref{Almost-Real}(d), $B\in M_\delta^+$.
By Proposition \ref{Eigenvalues}, the eigenvalues of $B$ have positive real parts, hence $S$ is well defined.
Then $\psi(h)=Q=S^{-1}h=Sh^{T-1}=Sg^{T-1}p^{T-1}$.

By Proposition \ref{Description-Sqrt(B)} we have
$S=U(I+iK)\Lambda U^T$, here $U\in SO(n,\R)$, $K=O(\delta)$,
$\Lambda>0$ is diagonal. Then
$$
Q=U(I+iK)\Lambda U^T g^{T-1}p^{T-1}.
$$
We note that $Q_1:=\Lambda U^T g^{T-1}\in M_\delta$
because it is real.
By Lemma \ref{Almost-Real}(e), $Q_2:=(I+iK)Q_1\in M_\delta$
because $K=O(\delta)$.
By Lemma \ref{Almost-Real}(f), $Q_3:=UQ_2\in M_\delta$
because $U\in SO(n,\R)$.
Again by Lemma \ref{Almost-Real}(e), $Q=Q_3 p^{T-1}\in M_\delta$
because $p^{T-1}=I+O(\delta)$.
Finally, by Lemma \ref{Orth-close-to-real},
since $Q\in SO(n,\C)\cap M_\delta$, the matrix $\psi(h)=Q$
is $\epsilon$-close to a matrix in $SO(n,\R)$,
$\epsilon=O(\delta)$, as desired.
\end{proof}


\begin{thebibliography}{99}

\bibitem{Bedford}
E. Bedford and J. Dadok,
Bounded domains with prescribed group of automorphisms.
Comment. Math. Helv. 62 (1987), 561--572.

\bibitem{Cartan}
H. Cartan,
Sur les groupes de transformations analytiques.
Actualites Sc. et Indus. Hermann, Paris, 1935.

\bibitem{Chern-Moser}
S. S. Chern and J. K. Moser,
Real hypersurfaces in complex manifolds.
Acta Math. 133 (1974), 219--271.

\bibitem{Vinberg}
V. V. Gorbatsevich, A. L. Onishchik, and E. B. Vinberg,
Structure of Lie Groups and Lie Algebras.
Lie Groups and Lie Algebras III.
Encyclopaedia of Mathematical Sciences, vol. 41.

\bibitem{Kan}
S.-J. Kan,
Complete hyperbolic Stein manifolds with prescribed
automorphism groups.
Comment. Math. Helv. 82 (2007), 371--383.

\bibitem{Procesi}
C. Procesi,
Lie groups: An approach through invariants and representations. Universitext, Springer, New York, 2007.

\bibitem{Rosay}
J.-P. Rosay,
Sur une caracteristation de la boule parmi les domaines de $\C^n$ par son groupe d'automorphismes.
Ann. Inst. Fourier Grenoble, 29 (1979), 91--97.

\bibitem{Saerens}
R. Saerens and W. R. Zame,
The Isometry Groups of Manifolds and the Automorphism
Groups of Domains.
Trans. AMS 301 (1987), 413--429.

\bibitem{Shabat}
G. Shabat and A. Tumanov,
Realization of linear Lie groups by biholomorphic
automorphisms.
Funct. Anal. Appl. 24 (1991), 255--257.

\bibitem{Shabat2024}
G. Shabat and A. Tumanov,
Non-linear Lie groups that can be realized
as automorphism groups of bounded domains.
Proc. AMS, to appear.
Preprint, arXiv:2406.09600

\bibitem{Winkelmann}
J. Winkelmann,
Realizing connected Lie groups as automorphism groups of complex manifolds.
Comment. Math. Helv. 79 (2004), 285--299.

\bibitem{Wong}
B. Wong,
Characterization of the unit ball in $\C^n$ by its automorphism group.
Invent. Math. 41 (1977), 253--257.

\end{thebibliography}
\end{document}